\newtheorem{thm}{Theorem}[section]
\newtheorem{theorem}[thm]{Theorem}
\newtheorem{corollary}[thm]{Corollary}
\newtheorem{proposition}[thm]{Proposition}
\newtheorem{lemma}[thm]{Lemma}
\newtheorem*{theorem*}{Theorem}
\newtheorem*{conjecture*}{Conjecture}
\newtheorem*{corollary*}{Corollary}
\theoremstyle{definition}
\newtheorem*{defn*}{Definiton}
\newtheorem{example}[thm]{Example}
\newtheorem*{ack}{Acknowledgements}
\newtheorem{remark}[thm]{Remark}
\newtheorem{question}[thm]{Question}
\newcommand{\Z}{\mathbb{Z}} 
\newcommand{\C}{\mathbb{C}} 
\newcommand{\T}{\mathbb{T}} 
\newcommand{\R}{\mathbb{R}}
\newcommand{\G}{\Gamma}
\newcommand{\g}{\gamma}
\DeclareMathOperator{\Ima}{im}
\DeclareMathOperator{\interior}{int}
\newcommand{\La}{\Lambda}
\newcommand{\Sub}{\operatorname{Sub}}
\newcommand{\Prim}{\operatorname{Prim}}
\newcommand{\act}{\!\curvearrowright\!}
\newcommand{\Fix}{\operatorname{Fix}}
\newcommand{\cC}{\mathcal{C}}
\newcommand{\cI}{\mathcal{I}}
\newcommand{\cO}{\mathcal{O}}
\newcommand{\cP}{\mathcal{P}}
\newcommand{\id}{\mathrm{id}}
\renewcommand{\Prim}{\mathrm{Prim}}
\newcommand{\acts}{\curvearrowright}
\title{A tracial characterization of Furstenberg's $\times p,\times q$ conjecture}
\author{Chris Bruce}
\address[Chris Bruce]{School of Mathematics and Statistics, University of Glasgow, University Place, Glasgow G12 8QQ, United Kingdom}
\email{Chris.Bruce@glasgow.ac.uk}
\author{Eduardo Scarparo}
\address[Eduardo Scarparo]{Center for Engineering, Federal University of Pelotas\\ Brazil}
\email{eduardo.scarparo@ufpel.edu.br}
\thanks{This project has received funding from the European Research Council (ERC) under the European Union's Horizon 2020 research and innovation programme (grant agreement No. 817597). C. Bruce has received funding from the European Union’s Horizon 2020 research and innovation programme under the Marie Sklodowska-Curie grant agreement No 101022531.}
\subjclass[2020]{37A55, 46L05}
\begin{document}

\maketitle
\begin{abstract}
We investigate almost minimal actions of abelian groups and their crossed products. As an application, given multiplicatively independent integers $p$ and $q$, we show that Furstenberg's $\times p,\times q$ conjecture holds if and only if the canonical trace is the only faithful extreme tracial state on the $C^*$-algebra of the group $\Z[\frac{1}{pq}]\rtimes\Z^2$.  We also compute the primitive ideal space and K-theory of $C^*(\Z[\frac{1}{pq}]\rtimes\Z^2)$.
\end{abstract}

\section{Introduction}
Let $p,q\geq 2$ be two \emph{multiplicatively independent} integers, in the sense that there are no $r,s\in\Z_{>0}$ such that $p^r=q^s$. In \cite{F67}, Furstenberg showed that the only infinite closed $\times p,\times q$-invariant subset of $\frac{\R}{\Z}\simeq\T$ is the whole circle. The measure-theoretic analogue of this result is now one of the most fundamental open problems in ergodic theory (for a survey on this problem, see, for example, \cite{Linden}). It can be formulated precisely as follows:

\begin{conjecture*}[Furstenberg's $\times p,\times q$ conjecture]
The only ergodic $\times p,\times q$-invariant probability measure on $\T$ with infinite (hence full) support is the Lebesgue measure.
\end{conjecture*}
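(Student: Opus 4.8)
The honest starting point is that Furstenberg's $\times p,\times q$ conjecture is a famous \emph{open} problem; what follows is therefore not a complete proof but the standard line of attack together with the precise point at which every known approach stalls. First I would fix notation: write $T_p,T_q\colon\T\to\T$ for multiplication by $p$ and $q$, and let $\mu$ be an ergodic $\times p,\times q$-invariant probability measure with infinite support. By Furstenberg's topological theorem \cite{F67}, the closed invariant set $\supp\mu$ is either finite or all of $\T$; since it is infinite it must be all of $\T$, so $\mu$ has full support. The goal is then to upgrade ``full support'' to ``$\mu$ is Lebesgue measure.''

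The decisive reduction is the entropy dichotomy. By the Rudolph--Johnson theorem, if $\mu$ has positive entropy with respect to $T_p$, then $\mu$ is Lebesgue. Because $\mu$ is jointly invariant and each $T_p,T_q$ is a linear expanding map of $\T$, the exact-dimensionality formula $\dim_H\mu = h_\mu(T_p)/\log p = h_\mu(T_q)/\log q$ shows that the two entropies vanish simultaneously and that positive entropy is equivalent to positive Hausdorff dimension; hence the hypothesis is symmetric in $p$ and $q$. The proof of this step analyses the conditional measures of $\mu$ along the fibres of the two commuting endomorphisms, shows that positive entropy forces these conditionals to be Haar on suitable subgroups, and then bootstraps to full Lebesgue measure. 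After this reduction, the only surviving possibility is that a non-Lebesgue $\mu$ has \emph{zero} entropy, equivalently zero Hausdorff dimension.

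The hard part---indeed the entire content of the open problem---is to rule out an ergodic, zero-entropy, full-support measure. The strategy one would pursue is to exploit the self-similar ``scenery'' of $\mu$ under magnification by $T_p$: joint $T_p,T_q$-invariance places extremely rigid constraints on the family of tangent and conditional measures, and one would hope to show these constraints are incompatible with simultaneously having zero dimension and full support, forcing $\mu$ to be carried by a single periodic orbit and thereby contradicting full support. Tools that chip away at this include Host's normality theorem, Lindenstrauss-type pointwise rigidity, scenery-flow machinery, and the effective equidistribution bounds of Bourgain--Lindenstrauss--Michel--Venkatesh. The obstruction common to all of them is that there is at present no mechanism forcing a zero-entropy jointly invariant measure to be atomic: every available rigidity statement requires either positive entropy, or a positive-dimension/recurrence input, that one cannot verify a priori. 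Supplying the missing lower bound on entropy or dimension, or otherwise eliminating these exotic measures, is exactly what makes the conjecture open.

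Finally, I would record that the tracial reformulation established in this paper reroutes the problem rather than resolving it: uniqueness of the faithful extreme tracial state on $\cs(\Z[\tfrac{1}{pq}]\rtimes\Z^2)$ is equivalent to the conjecture, so the structural theory of almost minimal actions and their crossed products offers an operator-algebraic arena in which to look for the same rigidity. This is a genuinely useful reformulation, but it recasts the zero-entropy obstacle in $C^*$-algebraic language without removing it, so I would not expect it to close the gap by itself.
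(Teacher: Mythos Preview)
Your assessment is correct and appropriately honest: the statement is presented in the paper as a \emph{conjecture}, not a theorem, and the paper makes no attempt whatsoever to prove it. There is therefore no ``paper's own proof'' to compare against. The paper's contribution is precisely what you describe in your final paragraph: it establishes that the conjecture is \emph{equivalent} to the canonical trace being the only faithful extreme tracial state on $C^*(\Z[\frac{1}{pq}]\rtimes\Z^2)$ (Corollary~\ref{thm:tf}), thereby reformulating the problem in operator-algebraic terms without resolving it.

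Your summary of the state of the art---Furstenberg's topological theorem forcing full support, the Rudolph--Johnson reduction to the zero-entropy case, and the absence of any known mechanism ruling out a zero-entropy, full-support, jointly invariant ergodic measure---is accurate and well organized. The only thing to flag is that none of this background appears in the paper itself; the paper simply states the conjecture and proceeds to its tracial characterization. So while your write-up is a useful survey of why the conjecture is hard, it goes well beyond what the paper contains, and you should be clear that you are supplying external context rather than reconstructing an argument from the paper.
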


There have already been two connections between the above conjecture and $C^*$-algebra theory. Following an idea from Cuntz, Huang and Wu in \cite{HW17} gave a characterization of Furstenberg's conjecture in terms of irreducible representations of the group $\Z[\frac{1}{pq}]\rtimes\Z^2$. 
In \cite{S-T20}, Furstenberg's theorem on closed $\times p,\times q$-invariant subsets of $\T$ was used to show that every non-zero ideal $I\unlhd C^*(\Z[\frac{1}{pq}]\rtimes\Z^2)$ intersects $\C[\Z[\frac{1}{pq}]\rtimes\Z^2]$ non-trivially. Since there is a non-trivial, but well-understood, relationship between $C^*$-simplicity of a group (that is, simplicity of the reduced group $C^*$-algebra) and uniqueness of the canonical tracial state on the reduced group $C^*$-algebra (\cite{BKKO}), it is, in light of \cite{S-T20}, natural to ask if Furstenberg's $\times p,\times q$ conjecture has a $C^*$-algebraic manifestation in terms of tracial states on $C^*(\Z[\frac{1}{pq}]\rtimes\Z^2)$.
In this work, we show that this is indeed the case: We present a characterization of the conjecture in terms of the set of tracial states on $C^*(\Z[\frac{1}{pq}]\rtimes\Z^2)$, see Corollary~\ref{thm:tf}.

Since $C^*(\Z[\frac{1}{pq}]\rtimes\Z^2)$ can be realized as a crossed product $C^*$-algebra for a certain almost minimal action of $\Z^2$, we begin with several general results on tracial states and primitive ideals for crossed products arising from such actions. Recall that an action of a group $\G$ on a compact space $X$ is said to be \emph{almost minimal} if every invariant closed set $F\subsetneq X$ is finite.  Almost minimality is also known in the literature as \emph{irreducibility} or as the \emph{ID property} (see, for example, \cite{S95}*{Section 29} for an account of this property).

In Section \ref{sec:am}, we compute the primitive ideal space and the set of extreme tracial states of the crossed product associated to an almost minimal action of an abelian group. Along the way, we fix a mistake in the literature concerning the computation of the primitive ideal spaces of a special class of such crossed products (see Remark \ref{rem:fix})

In Section \ref{ref:sr}, we apply the results of the previous section for computing the primitive ideal space and the set of extreme tracial states on $C^*(\Z[\frac{1}{pq}]\rtimes\Z^2)$. As an application, we show that Furstenberg's $\times p,\times q$ conjecture holds if and only if the canonical trace is the only faithful extreme tracial state on $C^*(\Z[\frac{1}{pq}]\rtimes\Z^2)$. We also compute the K-theory of $C^*(\Z[\frac{1}{pq}]\rtimes\Z^2)$.

Finally, in Section \ref{sec:q}, we pose a few questions that arise naturally in light of our results.
\begin{ack}
We thank the referee for several useful suggestions and for pointing out a gap in the original argument of Theorem~\ref{thm:kt}.
\end{ack}

\section{Almost minimal actions}\label{sec:am}

\subsection{Preliminaries}

Throughout this paper, $\G\act X$ denotes an action of a group $\G$ on a compact Hausdorff space $X$ by homeomorphisms. Given $x\in X$, we let $\G_x:=\{g\in \G:gx=x\}$ be the \emph{stabilizer} of $x$, and $[x]:=\{gx:g\in\G\}$ be the \emph{orbit} of $x$.

Given $g\in\G$, let $\Fix_g:=\{x\in X:gx=x\}$. We say that the action is \emph{faithful} if $\Fix_g\subsetneq X$ for any $g\in\G\setminus\{e\}$. The action is said to be \emph{topologically free} if, for every $g\in\G\setminus\{e\}$, $\interior\Fix_g=\emptyset$. If $\G$ is countable, then topological freeness is equivalent to the set $\{x\in X:\G_x=\{e\}\}$ being dense in $X$ by a Baire category argument. 

Let $\cP_\G(X)$ be the space of $\G$-invariant regular probability measures on $X$. We say that $\mu\in\cP_\G(X)$ is \emph{essentially free} if, for any $g\in \G\setminus\{e\}$, $\mu(\Fix_g)=0$. If $\G$ is countable, then $\mu$ is essentially free if and only if $\mu(\{x\in X:\G_x=\{e\}\})=1$.

The action is said to be \emph{almost minimal} if every invariant closed set $F\subsetneq X$ is finite. If $\G\act X$ is almost minimal, then any infinite orbit is dense in $X$. 

\begin{example}
    Let $\alpha$ be an action by homeomorphisms on a non-compact, locally compact Hausdorff space $X$. If $\alpha$ is \emph{minimal} (that is, every orbit is dense), then the extension of $\alpha$ to the one-point compactification of $X$ is almost minimal.
\end{example}

Certain almost minimal algebraic actions were studied by Berend (\cite{Ber83,Ber84}) and by Laca and Warren (\cite{LW20}). We also refer the reader to Schmidt's book \cite{S95}*{Section 29} for further discussions of examples from algebraic actions.

Any action on a finite space is almost minimal, but if $\G\act X$ is an almost minimal action and $X$ is infinite, then $\G$ is infinite:
\begin{proposition}\label{prop:inf}
    Let $X$ be an infinite compact space and $\G\act X$ an almost minimal action. Then $\G$ is infinite.
\end{proposition}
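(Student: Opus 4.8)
The plan is to argue by contradiction: suppose $\G$ is finite. Then every orbit $[x]$ has at most $|\G|$ elements, so all orbits are finite and, since $X$ is Hausdorff, closed. The goal is to manufacture an infinite, proper, $\G$-invariant closed subset of $X$ — which directly contradicts almost minimality — and I would do this by pulling back a suitable closed subset of the orbit space $Y:=X/\G$.

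First I would record that $Y$, together with the quotient map $\pi\colon X\to Y$, is an \emph{infinite compact Hausdorff} space. Compactness is automatic. The map $\pi$ is closed because $\G$ is finite: for closed $C\subseteq X$ one has $\pi^{-1}(\pi(C))=\bigcup_{g\in\G}gC$, a finite union of closed sets, so $\pi(C)$ is closed. For the Hausdorff property, given distinct orbits $O_1\neq O_2$ I would separate these finite closed sets by disjoint open sets $V_1,V_2$ and replace each $V_i$ by the $\G$-invariant open set $\bigcap_{g\in\G}gV_i\supseteq O_i$; these descend to disjoint open neighbourhoods of the two points of $Y$. Finally, $\pi$ has finite fibres, so if $Y$ were finite then $X$ would be a finite union of finite sets, contradicting that $X$ is infinite.

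Next I would invoke the elementary fact that \emph{every infinite compact Hausdorff space has an infinite proper closed subset}: if it has an isolated point $y_0$, then $Y\setminus\{y_0\}$ works; otherwise $Y$ has no isolated points, and then for any two distinct points $a,b$ one chooses (using regularity of a compact Hausdorff space) an open $U$ with $a\in U\subseteq\overline{U}\subseteq Y\setminus\{b\}$, and $F:=\overline{U}$ works — it is proper since $b\notin\overline{U}$, and infinite since the nonempty open set $U$ cannot be finite in a space with no isolated points. Applying this to $Y$ gives such an $F$. Then $\pi^{-1}(F)$ is closed (continuity of $\pi$), $\G$-invariant (it is a union of orbits), proper (as $F\neq Y$), and infinite (since $\pi$ maps it onto the infinite set $F$), which contradicts almost minimality of $\G\act X$; hence $\G$ is infinite.

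I do not anticipate a genuine obstacle — this is point-set topology — but the one spot requiring care is the Hausdorffness of the orbit space in the first step, which is exactly where finiteness of $\G$ is used in an essential way: for a merely $T_1$ quotient (e.g.\ an infinite set with the cofinite topology) the fact invoked in the third step fails, so reducing to a compact \emph{Hausdorff} space is the crux.
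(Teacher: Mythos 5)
Your proof is correct. It reaches the same contradiction as the paper (an infinite, proper, closed, $\G$-invariant subset of $X$ cannot exist under almost minimality when $X$ is infinite), but by a genuinely different route. The paper works entirely inside $X$: it picks two disjoint orbits $[x],[y]$, builds disjoint $\G$-invariant open neighbourhoods $W_x\supseteq[x]$, $W_y\supseteq[y]$ (via a stabilizer-invariant neighbourhood $V=\bigcap_{g\in\G_x}gU$ and its translates by coset representatives), and then observes that $W_x^c\cup W_y^c=X$ forces one of these closed invariant sets to be infinite, hence equal to $X$, contradicting non-emptiness of the other open set. You instead pass to the orbit space $Y=X/\G$, prove it is an infinite compact Hausdorff space, invoke the general point-set fact that such a space contains an infinite proper closed subset (isolated point versus perfect dichotomy), and pull it back along $\pi$. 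The overlap is in the separation step: your verification that $Y$ is Hausdorff, via the saturated open sets $\bigcap_{g\in\G}gV_i$, is essentially the same trick as the paper's construction of $W_x,W_y$ — and in fact your saturation $\bigcap_{g\in\G}gV_i$ is a slightly cleaner way to get invariant separating neighbourhoods than the paper's coset decomposition, since it is automatically contained in $V_i$. What the paper's argument buys is brevity and self-containedness (no quotient space, no closed-map or isolated-point discussion); what yours buys is modularity: the statement that $X/\G$ is infinite compact Hausdorff for a finite group action, and the lemma about infinite compact Hausdorff spaces, are reusable general facts, and your closing remark correctly identifies Hausdorffness of the quotient as the point where finiteness of $\G$ (together with the standing Hausdorff assumption on $X$) is indispensable.
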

\begin{proof}
    Suppose $\G$ is finite. Take $x,y\in X$ such that $[x]\cap[y]=\emptyset$. We will show that there are disjoint $\G$-invariant open neighbourhoods $W_x$ and $W_y$ of $[x]$ and $[y]$, respectively. This suffices to prove the result by the following argument: Since $W_x^c\cup W_y^ c= X$, we have that either $W_x^c$ or $W_y^c$ is infinite, hence equal to $X$, thus contradicting that $W_x$ and $W_y$ are not empty.

Now let us construct such $W_x$ and $W_y$. Given an open neighbourhood $U$ of $x$, the set $V:=\bigcap_{g\in\G_x}gU$ is a $\G_x$-invariant open neighbourhood of $x$. Let $g_1,\dots,g_n$ be left coset representatives for $\G/\G_x$. By taking $U$ smaller, we may assume that $g_1 V,\dots,g_n V$ are disjoint sets. Then $W_x:=\bigcup_{i=1}^n g_i V$ is a $\G$-invariant open neighbourhood of $[x]$. Moreover, by taking $U$ smaller, we can assume that $\overline{W_x}\cap [y]=\emptyset$. By constructing a neighbourhood $W_y$ of $[y]$ in an analogous way, such that $W_y\subseteq \overline{W_x}^c$, we obtain the open sets with the desired properties.
\end{proof}
For the proof of the next result, notice that, if $h$ is a homeomorphism on a space $X$, then, given $Y\subset X$, we have that $h((\interior Y)^c)=(h(\interior Y))^c=(\interior h(Y))^c$. In particular, if $h(Y)=Y$, then $h((\interior Y)^c)=(\interior Y)^c$ as well.
\begin{proposition}\label{prop:tf}
Let $\G$ be a countable abelian group, $X$ an infinite compact space and $\G\act X$ a faithful almost minimal action. Then the action $\G\act X$ is topologically free, and the set of points that have finite orbits is countable and has empty interior.

\end{proposition}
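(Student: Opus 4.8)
The plan is to exploit the abelian structure to propagate fixed points along orbits, and then combine with almost minimality to force any $\Fix_g$ with nonempty interior to be everything. Fix $g\in\G\setminus\{e\}$ and suppose for contradiction that $\interior\Fix_g\neq\emptyset$. Since $\G$ is abelian, for any $h\in\G$ and any $x$ with $gx=x$ we have $g(hx)=h(gx)=hx$, so $\Fix_g$ is $\G$-invariant; hence $\interior\Fix_g$ is a nonempty $\G$-invariant open set, so its complement $(\interior\Fix_g)^c$ is a $\G$-invariant closed set (using the remark preceding the proposition, or just invariance of the open set) that is not all of $X$. By almost minimality, $(\interior\Fix_g)^c$ is finite, so $\interior\Fix_g$ is cofinite; in particular $\Fix_g$ is cofinite. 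Now I want to upgrade "cofinite" to "everything". The closed set $\Fix_g$ contains the cofinite open set $\interior\Fix_g$, so $F:=X\setminus\interior\Fix_g$ is finite and $\G$-invariant. Each point of $F$ has finite orbit (an orbit inside the finite set $F$), so $F$ is a finite union of finite orbits; let $x\in F$. Its orbit $[x]\subseteq F$ is finite, so $\G_x$ has finite index, and $gx$ lies in $F$ as well. The point is that $g$ fixes everything in the cofinite set $\interior\Fix_g$, and I must show it also fixes the finitely many exceptional points. For this, pick $x\in F$ and note that since $X$ is infinite and $F$ is finite, $x$ is in the closure of $\interior\Fix_g$ (indeed, every point of the infinite compact space is a limit of points of any cofinite set, because a cofinite set is dense once $X$ has no isolated points — but $X$ may have isolated points, so I should instead argue as follows): consider $\Fix_g$, which is closed and contains the open cofinite set $\interior\Fix_g$; if $x\in F\setminus\Fix_g$ then $x$ is an isolated point of $X$ (its complement of a neighbourhood is cofinite). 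So the only way to fail faithfulness-free-ness is via isolated points.

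To handle isolated points I use that $X$ is infinite and $\G\act X$ is almost minimal: the set of isolated points is $\G$-invariant and open, so if nonempty its complement is a proper $\G$-invariant closed set, hence finite — meaning all but finitely many points are isolated, so $X$ is a countable discrete-plus-finite space, and an infinite almost minimal action there forces the isolated points to form a single infinite orbit (otherwise split off a proper infinite invariant closed subset). Then $g$, fixing a cofinite subset of this single orbit $[x]$, must fix all of $[x]$: if $gx_0\neq x_0$ for some $x_0$ in the orbit, then writing a general orbit element as $hx_0$, we get $g(hx_0)=h(gx_0)$, and since $h\mapsto hx_0$ is surjective onto $[x]$ while $h\mapsto h(gx_0)$ is also surjective onto $[x]$, the condition $g$ fixes cofinitely many orbit points translates to $hx_0 = h(gx_0)$ for cofinitely many $h$, i.e.\ $h^{-1}g h x_0 = x_0$ for cofinitely many $h$; by abelianness this is $g x_0 = x_0$ for cofinitely many $h$, a contradiction with $gx_0\neq x_0$ unless in fact $gx_0=x_0$. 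Hence $g$ fixes all of $X$, contradicting faithfulness. Therefore $\interior\Fix_g=\emptyset$ for every $g\neq e$, i.e.\ the action is topologically free.

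For the second assertion, let $Y$ be the set of points with finite orbit. Topological freeness gives, by the Baire category argument recalled in the preliminaries (valid since $\G$ is countable), that $\{x: \G_x=\{e\}\}$ is dense; a point with trivial stabilizer in an infinite group has infinite orbit, so $Y$ is contained in the complement of a dense set and hence has empty interior. For countability: each $x\in Y$ has $\G_x$ of finite index, and since $\G$ is countable it has only countably many finite-index subgroups; for each finite-index subgroup $H\le\G$, the set $\Fix_H:=\bigcap_{h\in H}\Fix_h$ is a closed $\G$-invariant (as $\G$ is abelian, $\Fix_H$ is normalized, hence invariant) set, and if it is all of $X$ then $H$ acts trivially, contradicting faithfulness of the finite-index-quotient action combined with... — more directly: if $\Fix_H = X$ then every $h\in H\setminus\{e\}$ has $\Fix_h = X$, contradicting faithfulness, so $\Fix_H\subsetneq X$ is a proper invariant closed set, hence finite by almost minimality. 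Since $Y=\bigcup_{[\G:H]<\infty}\Fix_H$ is a countable union of finite sets, $Y$ is countable.

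The main obstacle I anticipate is the isolated-point bookkeeping: "cofinite implies dense" fails in general, so the argument that $g$ fixing $\interior\Fix_g$ forces $g$ to fix the finitely many leftover points genuinely needs the structural analysis of almost minimal actions on spaces with isolated points (the isolated points form one infinite orbit, or there are none). Packaging that cleanly — perhaps by first reducing to the two cases "$X$ perfect" and "$X$ has isolated points, which then form a single infinite orbit" — is where care is required; everything else is a routine application of abelianness plus almost minimality plus the Baire argument already stated in the text.
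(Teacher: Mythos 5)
Your topological-freeness argument, while salvageable, misses the one observation that makes everything short and that the paper's proof is built on: for $g\neq e$, the set $\Fix_g$ is closed, $\G$-invariant (by commutativity, as you note), and proper by faithfulness, hence \emph{finite} by almost minimality. Once you know this, topological freeness is immediate ($(\interior\Fix_g)^c$ is closed, invariant and infinite, hence all of $X$), and your detour through isolated points is unnecessary. Even within your own scheme, after you deduce that $\interior\Fix_g$ is cofinite you could stop: $\Fix_g$ is then an infinite, proper, closed, invariant set, contradicting almost minimality directly. As written, the detour also has small unjustified jumps (e.g.\ passing from ``$g$ fixes the single orbit of isolated points'' to ``$g$ fixes all of $X$'' needs that an infinite orbit is dense, or another application of almost minimality to $\Fix_g$), though these are easily repaired.

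The genuine gap is in the countability step. You write $Y=\bigcup_{[\G:H]<\infty}\Fix_H$ and assert that a countable group has only countably many finite-index subgroups. That is false: the countable abelian group $\bigoplus_{n\in\N}\Z/2\Z$ has uncountably many subgroups of index $2$ (the kernels of the uncountably many nonzero homomorphisms to $\Z/2\Z$ are pairwise distinct), so your union is a priori an uncountable union of finite sets and countability of $Y$ does not follow from it. The correct route, which is the paper's, is: $\G$ is infinite --- this is Proposition~\ref{prop:inf}, which you also use tacitly when you say that a point with trivial stabilizer has infinite orbit, and it does require an argument --- so any point with finite orbit has nontrivial stabilizer and hence lies in $\Fix_g$ for some $g\in\G\setminus\{e\}$; since each such $\Fix_g$ is finite (by the observation above) and $\G$ is countable, $Y\subseteq\bigcup_{g\neq e}\Fix_g$ is countable, and the Baire category theorem applied to these closed sets with empty interior gives that $Y$ has empty interior. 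Your Baire argument via density of the free points is fine as an alternative for the empty-interior claim, but it too rests on $\G$ being infinite, so cite or prove Proposition~\ref{prop:inf} rather than assuming it.
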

\begin{proof}
Given $g\in\G\setminus\{e\}$, we have that $\Fix_g\subsetneq X$ is closed and invariant, hence finite.  Since $(\interior\Fix_g)^c$ is closed, infinite, and invariant, it follows that $(\interior\Fix_g)^c=X$, hence $\interior\Fix_g=\emptyset$. Therefore, the action is topologically free.

Suppose $x\in X$ has finite orbit. Since $\G$ is infinite by Proposition \ref{prop:inf}, we have that $x\in\Fix_g$ for some $g\in\G\setminus\{e\}$. Since $\Fix_g$ is finite and has empty interior for any $g\in\G\setminus\{e\}$, the result follows from the Baire category theorem.
\end{proof}

\subsection{Primitive ideals}
Recall that an ideal $I$ of a $C^*$-algebra $A$ is said to be \emph{primitive} if there exists a non-zero irreducible representation $\varphi\colon A \to B(H)$ such that $I=\ker(\varphi)$. The \emph{primitive ideal space of $A$}, denoted by $\Prim(A)$, is the set of primitive ideals endowed with the \emph{hull-kernel topology} (see, for instance, \cite[Section 5.4]{M90}).

In the proof of the next result, we use a description from \cite{W81} of the primitive ideal space of the crossed product associated to an action of an abelian group. We also use some ideas from \cite[Section~5]{LW20} (beware that there is an issue with the description of convergent nets in \cite[Theorem~5.2]{LW20}, see Remark~\ref{rem:fix}). For an alternative approach to describing the primitive ideal space, see the proof of \cite[Theorem~9.D.1]{BdlH20}.

\begin{theorem}\label{thm:pis}
Let $\G$ be a countable abelian group, $X$ an infinite, second countable compact space, and $\G\act X$ a faithful and almost minimal action. Then the set $\mathcal{C}$
of finite orbits is countable, and $\Prim(C(X)\rtimes\G)$ is homeomorphic to
$$P:=\left(\bigsqcup_{[x]\in \mathcal{C}}\{[x]\}\times\widehat{\G_x}\right)\sqcup\{\infty\},$$
where the topology on $P$ is defined as follows: the closed subsets of $P$ are $P$ and finite unions of elements from the collection $\{\{[x]\}\times F:\text{$[x]\in\mathcal{C}$, $F$ a closed subset of $\widehat{\G_x}$}\}$.
\end{theorem}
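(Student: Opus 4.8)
The plan is to describe $\Prim(C(X)\rtimes\G)$ via Williams' theorem, using the fact that, since $\G$ is abelian, the orbit space $X/\G$ and the Mackey-Green machine are tractable. First I would recall that, because the action is topologically free on $X\setminus\mathcal{C}$ (Proposition~\ref{prop:tf}) and the only closed invariant sets are $X$ itself and finite subsets of $\mathcal{C}$, the crossed product sits in an extension $0\to C_0(X\setminus\mathcal{C})\rtimes\G\to C(X)\rtimes\G\to C(\mathcal{C})\rtimes\G\to 0$ whenever $\mathcal{C}$ is closed; in general one localizes over the open dense invariant set $X\setminus\mathcal{C}$. The ideal $C_0(X\setminus\mathcal{C})\rtimes\G$ is simple: any non-zero invariant ideal of $C_0(X\setminus\mathcal{C})$ corresponds to a non-empty open invariant subset, whose complement in $X$ is closed, invariant and has non-empty interior in $X\setminus\mathcal C$, hence (being not all of $X$ and containing a non-finite-orbit point only if it meets $X\setminus\mathcal C$) must be empty; combined with topological freeness this yields simplicity, so this ideal contributes the single point $\{\infty\}$ to $\Prim$.

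Next I would handle the finite orbits. For each $[x]\in\mathcal{C}$, the quotient-type building block is $C_0([x])\rtimes\G\cong M_{|[x]|}\otimes C^*(\G_x)$ by the imprimitivity theorem / the fact that an induced action from the stabilizer gives a matrix algebra over the fixed-point algebra; since $\G_x$ is abelian, $C^*(\G_x)\cong C(\widehat{\G_x})$, so the primitive ideals living over the orbit $[x]$ are exactly $\{[x]\}\times\widehat{\G_x}$. Running this over all $[x]\in\mathcal{C}$ (countably many, by Proposition~\ref{prop:tf}) and adding the generic point $\infty$ gives the set $P$ as claimed. The identification is essentially Williams' description \cite{W81}: primitive ideals of $C(X)\rtimes\G$ are parametrized by pairs (orbit closure / quasi-orbit, character of the relevant stabilizer), and here every quasi-orbit is either a finite orbit $[x]$ with stabilizer $\G_x$, or the generic quasi-orbit $\overline{[x]}=X$ for $x$ with infinite orbit, whose stabilizer is trivial by topological freeness, yielding one point.

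For the topology I would argue directly with the hull–kernel topology rather than transporting Williams' topology. A basic observation: the point $\infty$ corresponds to a primitive ideal contained in every other primitive ideal (namely $C_0(X\setminus\mathcal C)\rtimes\G\subseteq$ any ideal that is non-zero on all of $X$ since its hull is everything), so $\{\infty\}$ is dense, i.e. $P$ is the only closed set containing $\infty$. Within a single fibre $\{[x]\}\times\widehat{\G_x}$, the relative topology is that of $\widehat{\G_x}$, because the ideals over $[x]$ form (up to the matrix amplification) the ideal lattice of $C(\widehat{\G_x})$. Finally, the fibres are "independent and closed away from $\infty$": a net in $\{[x]\}\times\widehat{\G_x}$ cannot converge to a point of $\{[y]\}\times\widehat{\G_y}$ for $[y]\neq[x]$, since one can separate the orbits $[x],[y]$ by invariant open sets (as in the proof of Proposition~\ref{prop:inf}) and hence separate the corresponding ideals; and the only accumulation outside the fibre is at $\infty$. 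Putting these together, a subset of $P$ is closed iff it is all of $P$, or it misses $\infty$ and meets each fibre in a closed subset of $\widehat{\G_x}$ — and, since a point $[x]$ has a finite orbit and $X$ is second countable with $\mathcal C$ countable, only finitely many fibres can be "hit in a non-closed-limiting way"; chasing the definitions shows the closed sets are exactly $P$ together with the finite unions of sets $\{[x]\}\times F$ with $F\subseteq\widehat{\G_x}$ closed.

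The main obstacle I expect is pinning down the topology precisely, in particular verifying that the closed sets not containing $\infty$ are exactly the \emph{finite} unions of the sets $\{[x]\}\times F$ — that is, that an infinite union of non-empty closed fibre-pieces is never closed (it must have $\infty$ in its closure). This is where second countability of $X$ and the structure of almost minimal actions really get used: one shows that if infinitely many finite orbits $[x_n]$ are "charged", a diagonal/Rokhlin-type argument produces a sequence of points with larger and larger orbits limiting onto a point of infinite orbit, forcing the corresponding primitive ideals to accumulate at $\infty$. This is also precisely the point where the erroneous convergence description in \cite[Theorem~5.2]{LW20} must be corrected, so I would be careful to phrase the limit behaviour in terms of the hull–kernel topology rather than in terms of convergence of orbits in $X$.
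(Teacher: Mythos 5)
Your set-level identification follows the same route as the paper (Williams' parametrization of $\Prim(C(X)\rtimes\G)$ by quasi-orbits and characters of stabilizers), and your treatment of the individual fibres via the quotients $C([x])\rtimes\G\cong M_{|[x]|}\otimes C(\widehat{\G_x})$, as well as the density of $\infty$, is sound. But two points do not hold up. First, the opening extension argument is flawed: $\mathcal{C}$ (the union of the finite orbits) is in general not closed --- in the motivating $\times p,\times q$ example it is dense in $X$ --- so $X\setminus\mathcal{C}$ is not open, there is no ideal $C_0(X\setminus\mathcal{C})\rtimes\G$, and the point $\infty$ corresponds to the zero ideal (which is primitive here), not to such an ideal. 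This paragraph is inessential to your plan, but the claim as stated is wrong.

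Second, and more seriously, the crux of the theorem --- that a subset meeting infinitely many fibres $\{[x]\}\times\widehat{\G_x}$ has closure equal to all of $P$, so that the closed sets avoiding $\infty$ are exactly the \emph{finite} unions of fibre pieces --- is exactly the step you leave to a vague ``diagonal/Rokhlin-type argument.'' Producing points of $X$ with larger and larger orbits limiting onto a point with infinite orbit says nothing, by itself, about the hull--kernel topology you have chosen to work in: what you must show is that the intersection of the primitive ideals lying over infinitely many distinct finite orbits is $\{0\}$. This does follow, but by a different mechanism: the intersection of these ideals meets $C(X)$ in the functions vanishing on $\overline{\bigcup_i [x_i]}=X$ (the union of infinitely many orbits is infinite and invariant, hence dense by almost minimality), and then one needs the intersection property for topologically free actions of amenable groups (every nonzero ideal of $C(X)\rtimes\G$ meets $C(X)$) to conclude the intersection is zero. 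Alternatively, as the paper does, one can keep Williams' homeomorphism with the quotient $(X\times\widehat{\G})/\!\sim$ carrying the quotient topology and argue there: for a closed saturated set whose image meets infinitely many fibres, its (closed, since $\widehat{\G}$ is compact) projection to $X$ is invariant and contains a dense set, hence is all of $X$, so the set contains a point with infinite orbit and its image contains $\infty$. Without one of these inputs your argument for the decisive ``only finitely many fibres'' direction is incomplete, and the statement of the correct limiting behaviour (your point (a) versus the error in \cite[Theorem~5.2]{LW20}) is precisely what is at stake there.
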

\begin{proof}
By Proposition \ref{prop:tf}, $\G\act X$ is topologically free, and $\cC$ is countable.

Consider the equivalence relation on $X\times\widehat{\G}$ defined by $(x,\eta)\sim(y,\chi)$ if $\overline{[x]}=\overline{[y]}$ and $\eta|_{\G_x}=\chi|_{\G_x}$ (note that, since $\G$ is abelian, $\overline{[x]}=\overline{[y]}$ implies that $\G_x=\G_y$). By \cite[Theorem 5.3]{W81}, $\Prim(C(X)\rtimes\G)$ is homeomorphic to the quotient space $\frac{X\times\widehat{\G}}{\sim}$.

Given $x\in X$ such that $[x]\notin\cC$, it follows from almost minimality and topological freeness of the action that $\overline{[x]}=X$ and $\G_x=\{e\}$. Let $f\colon \frac{X\times\widehat{\G}}{\sim}\to P$ be given by $f([x,\chi]):=([x],\chi|_{\G_x})$ if $[x]\in\mathcal{C}$, and $f([x,\chi]):=\infty$ otherwise. Given $x\in X$, let $r_x\colon\widehat{\G}\to\widehat{\G_x}$ be the restriction map. It is not difficult to see that $f$ is bijective (surjectivity follows from surjectivity of each $r_x$).

Let $\pi\colon X\times \widehat{\G}\to \frac{X\times\widehat{\G}}{\sim}$ be the quotient map. We will show that $f$ is continuous. For this, it suffices to show that $f\circ\pi$ is continuous. Given $[x]\in\cC$ and $F\subseteq\widehat{\G_x}$ closed, we have that $(f\circ\pi)^{-1}(\{[x]\}\times F)=[x]\times r_x^{-1}(F)$ is closed in $X\times\widehat{\G}$. Therefore, $f\circ\pi$ is continuous.

Let us now show that $f^{-1}$ is continuous, that is, that $f$ is a closed map. Let $A\subseteq \frac{X\times\widehat{\G}}{\sim}$ be a closed subset. We will show that $f(A)$ is closed in $P$. 

Case $\infty\in f(A)$: In this case, $\{x\in X:[x]\notin\cC\}\times\widehat{\G}\subseteq \pi^{-1}(A)$. Since the set of points with infinite orbit is dense in $X$ by Proposition \ref{prop:tf}, it follows that $\pi^{-1}(A)=X\times\widehat{\G}$, hence $f(A)=f(\pi(\pi^{-1}(A)))=P$.

Case $I_A:=\{[x]\in\cC:f(A)\cap(\{[x]\}\times\widehat{\G_x})\neq\emptyset\}$ is infinite: Since $U:=\bigcup I_A$ is $\G$-invariant and infinite, it follows from almost minimality that U is dense in $X$. This implies that $\{x : \exists\chi\in\widehat{\G} \text{ with } (x,\chi)\in\pi^{-1}(A)\}=X$. Hence, there exists $(x,\chi)\in\pi^{-1}(A)$ such that $[x]\notin\cC$. In particular, $\infty\in f(A)$ and $f(A)=P$ by the previous case.

Finally, assume that $I_A$ is finite and $\infty\notin f(A)$. In this case, there exists a family $(B_{[x]})_{[x]\in I_A}$ such that each $B_{[x]}$ is a subset of $\widehat{\G_x}$, and $f(A)=\bigsqcup_{[x]\in I_A}\{[x]\}\times B_{[x]}$. In order to conclude that $f(A)$ is closed, we have to show that each $B_{[x]}$ is closed in $\widehat{\G_x}$. We have that $\pi^{-1}(A)=\bigsqcup_{[x]\in I_A}[x]\times r_x^{-1}(B_{[x]})$. Given $[x]\in I_A$, since $\pi^{-1}(A)$ is closed in $X\times\widehat{\G}$, it follows that $r_x^{-1}(B_{[x]})$ is closed in $\widehat{\G}$. Since $r_x$ is a quotient map (being a surjective continuous map between compact spaces), we conclude that $B_{[x]}$ is closed in $\widehat{\G_x}$.
\end{proof}

\begin{remark}\label{rem:fix}
Given $\G\act X$ and $P$ as in Theorem \ref{thm:pis}, we have that $\infty$ is a dense point in $P$ (this corresponds to the fact that the ideal $\{0\}$ is a dense point in $\Prim(C(X)\rtimes\G)$). Moreover, given a net $([x_i],\chi_i)$ in $P\setminus\{\infty\}$, we have
\begin{enumerate}
    \item[(a)] if for every finite set $F\subseteq \cC$, the net $[x_i]$ is eventually outside $F$, then $([x_i],\chi_i)$ converges to every point in $P$;
    \item[(b)] if $[x_i]$ is eventually constant, $[x_i]=[x]$ for all $i\geq j$ say, then $([x_i],\chi_i)$ converges to $([y],\chi)$ if and only if $[y]=[x]$ and $\chi=\lim_{i\geq j}\chi_i$.
\end{enumerate}
The paper \cite{LW20} considers a special class of algebraic actions coming from algebraic number theory. Theorem~\ref{thm:pis} corrects an error in \cite[Section~5]{LW20}: Convergent nets as in (a) above are not accounted for in the description of the topology on the primitive ideal space stated in \cite[Theorem~5.2]{LW20}. Note that the class of actions considered in \cite[Theorem~5.2]{LW20} satisfy the hypothesis of Theorem~\ref{thm:pis} by \cite[Theorem~4.3]{LW20}.
\end{remark}

\subsection{Tracial states}
Given a group $\G$, let $\Sub(\G)$ be the set of subgroups of $\G$, endowed with the \emph{Chabauty topology}; this is the restriction to $\Sub(\G)$ of the product topology on $\{0,1\}^\G$, where every $\La\in\Sub(\G)$ is identified with its characteristic function $1_\La\in\{0,1\}^\G$. We also endow $\Sub(\G)$ with the action of $\G$ given by conjugation.

Given a convex set $K$, let $\partial K$ be the set of extreme points of $K$.

\begin{proposition}\label{prop:ef}
Let $\G$ be a countable abelian group and $\G\act X$ a topologically free action on a compact space $X$. Given $\mu\in\partial\cP_\G(X)$ with full support, we have that $\mu$ is essentially free.
\end{proposition}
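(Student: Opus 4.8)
The plan is to argue by contrapositive: suppose $\mu \in \cP_\G(X)$ has full support but is \emph{not} essentially free, and show that $\mu$ is not extreme. Non-essential-freeness means there is some $g \in \G \setminus \{e\}$ with $\mu(\Fix_g) > 0$. The key point is that $\Fix_g$ is a closed $\G$-invariant set (here we crucially use that $\G$ is abelian, so $h(\Fix_g) = \Fix_{hgh^{-1}} = \Fix_g$ for all $h \in \G$). Set $A := \Fix_g$; then $0 < \mu(A) < 1$, the strict upper bound because topological freeness gives $\interior \Fix_g = \emptyset$, so $\mu(A) = 1$ would force $\supp \mu \subseteq \overline{A} = A$, contradicting full support (as $X$ is infinite, or at any rate $A \ne X$).

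**Splitting the measure.** With $A$ a $\G$-invariant Borel set satisfying $0 < \mu(A) < 1$, I would define the two conditional probability measures $\mu_A := \frac{1}{\mu(A)} \mu|_A$ and $\mu_{A^c} := \frac{1}{\mu(A^c)} \mu|_{A^c}$. Both are regular (restriction of a regular measure to a Borel set, on a second-countable/compact space, is regular) and both are $\G$-invariant since $A$ and $A^c = X \setminus \Fix_g$ are $\G$-invariant and $\mu$ is $\G$-invariant. Moreover $\mu_A \ne \mu_{A^c}$ since they are supported on disjoint sets. Then the convex decomposition
\[
\mu = \mu(A)\,\mu_A + \mu(A^c)\,\mu_{A^c}
\]
exhibits $\mu$ as a non-trivial convex combination of two distinct elements of $\cP_\G(X)$, so $\mu \notin \partial \cP_\G(X)$. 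This contradicts the hypothesis, completing the contrapositive.

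**Main obstacle.** The only genuinely delicate point is justifying that $\mu(\Fix_g) < 1$, i.e. ruling out $\mu(\Fix_g) = 1$. Full support says $\mu(U) > 0$ for every nonempty open $U$; if $\mu(\Fix_g) = 1$ then $\mu(X \setminus \Fix_g) = 0$, but $X \setminus \Fix_g = (\interior \Fix_g)^c$... no — we need $X \setminus \Fix_g$ to have nonempty interior, equivalently $\Fix_g$ to not be dense. Since $\Fix_g$ is closed, $\overline{\Fix_g} = \Fix_g \ne X$ because the action is faithful (so $\Fix_g \subsetneq X$), hence $X \setminus \Fix_g$ is a nonempty open set, forcing $\mu(X \setminus \Fix_g) > 0$ by full support. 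So in fact faithfulness of the action on $X$ — which holds here since topological freeness implies faithfulness for $g \ne e$ — is what we need, and I would spell this out carefully. Everything else (invariance of $\Fix_g$ under the abelian group, regularity of conditional measures, the convexity computation) is routine.
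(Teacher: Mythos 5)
Your proof is correct, but it takes a different route from the paper's. You split $\mu$ directly along the closed, $\G$-invariant set $A=\Fix_g$ (invariance using that $\G$ is abelian), observing that full support together with $\interior\Fix_g=\emptyset$ (in fact mere faithfulness, i.e.\ $\Fix_g\neq X$, suffices) rules out $\mu(A)=1$, and then the convex decomposition $\mu=\mu(A)\mu_A+\mu(A^c)\mu_{A^c}$ contradicts extremality; the regularity and invariance checks for the conditional measures are routine, and restriction of a finite regular Borel measure to a Borel set is regular on any compact Hausdorff space, so your parenthetical appeal to second countability is unnecessary (it is not a hypothesis of the proposition). The paper instead pushes $\mu$ forward under the stabilizer map $S\colon X\to\Sub(\G)$, uses the $0$--$1$ law for extreme (ergodic) measures to deduce that $S_*\mu$ is a point mass at some $\{e\}\lneq\La\leq\G$, and then combines full support, closedness of $\Fix_g$, and the existence of a point with trivial stabilizer (topological freeness plus countability of $\G$) to reach a contradiction. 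Your argument is more elementary and slightly more general: it uses only the definition of an extreme point (rather than invoking the extreme-implies-ergodic equivalence, whose proof is essentially your splitting trick applied to invariant sets), it does not need countability of $\G$, and it only needs faithfulness rather than topological freeness; the paper's route yields the extra information that the stabilizer is $\mu$-a.e.\ equal to a fixed subgroup $\La$, which is not needed for the statement. Your brief mid-proof appeal to ``$X$ infinite'' is not a hypothesis, but you correctly repair this in your final paragraph, so there is no gap.
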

\begin{proof}
Suppose that $\mu(\{x\in X:\G_x\neq\{e\}\})>0$. The map 
\begin{align*}
S\colon X&\to\Sub(\G)\\
x&\mapsto \G_x
\end{align*}
is $\G$-invariant and Borel measurable. Since $\G$ is abelian, the action $\G\act\Sub(\G)$ is trivial. Given any Borel measurable set $U\subseteq \Sub(\G)$, by ergodicity of $\mu$ we have that $S_*\mu(U)=\mu(S^{-1}(U))\in\{0,1\}$. Thus, there exists $\{e\}\lneq\La\leq\G$ such that $S_*\mu=\delta_\Lambda$, the point-mass measure concentrated at $\Lambda$. It follows that $\mu(\{x\in X:\G_x=\La\})=1$. Since $\mu$ has full support, we have that $\{x\in X:\G_x=\La\}$ is dense. This implies that, for any $x\in X$, it holds that $\La\leq\G_x$. But by topological freeness, there exists $x\in X$ such that $\G_{x}=\{e\}$, which gives a contradiction.
\end{proof}

Given a $C^*$-algebra $A$, we denote by $T(A)$ the set of tracial states on $A$. We say that $\tau\in T(A)$ is \emph{faithful} if, for any $a\in A\setminus\{0\}$, we have that $\tau(a^*a)>0$.

Let $\G$ be an abelian group acting on a compact space $X$. Given $x\in X$ with finite orbit and $\chi\in\widehat{\G_x}$, let $\tau_{x,\chi}$ be the state on $C(X)\rtimes\G$ such that, given $f\in C(X)$ and $g\in\G$,
\begin{align}\label{eq:ft}
\tau_{x,\chi}(fu_g)=
\begin{cases}
\frac{\chi(g)}{|\G x|}\sum_{y\in[x]} f(y) &\text{ if $g\in\G_x$},\\
0 &\text{ otherwise.} 
\end{cases}
\end{align}
The fact that $\tau_{x,\chi}$ is well-defined follows, for example, from \cite[Corollary 2.5.12 and Exercise 4.1.4]{BO08}. Here, and throughout this paper, we use $u_g$ to denote both the canonical unitary in the group C*-algebra $C_r^*(\G)$ or the canonical unitary in the crossed product $C(X)\rtimes_r\G$ corresponding to an element $g\in\G$.

Given $\mu\in\cP_\G(X)$, let $\tau_\mu$ be the state on $C(X)\rtimes\G$ such that, given $f\in C(X)$ and $g\in\G$,
\begin{align}\label{eq:at}
\tau_\mu(fu_g)=
\begin{cases}
\int_X f\,d\mu &\text{if $g=e$},\\
0 &\text{otherwise.} 
\end{cases}
\end{align}

The following result is an immediate consequence of Propositions \ref{prop:tf} and \ref{prop:ef}, together with \cite[Corollary 2.4]{N13} (for an alternative approach to Neshveyev's result, see \cite[Theorem 2.7]{KTT} and the proof of \cite[Theorem 12.D.1]{BdlH20}).

\begin{proposition}\label{prop:taf}
Let $\G$ be a countable abelian group, $X$ an infinite, second countable compact space, and $\G\act X$ a faithful and almost minimal action. Then the set $\mathcal{C}$
of finite orbits is countable, and there is a bijective correspondence between 
$$\left(\bigsqcup_{ [x]\in \mathcal{C}}\{[x]\}\times\widehat{\G_x}\right)\sqcup\{\mu\in\partial\cP_\G(X):\mu\text{ has full support}\}$$
and $\partial T(C(X)\rtimes \G)$, which maps $\mu\in\partial\cP_\G(X)$ with full support to $\tau_\mu$ as in \eqref{eq:at}, and, given $[x]\in\mathcal{C}$, maps $([x],\chi)\in\{[x]\}\times\widehat{\G_x}$ to $\tau_{x,\chi}$ as in \eqref{eq:ft}.
\end{proposition}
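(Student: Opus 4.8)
The plan is to apply Neshveyev's classification of tracial states on reduced crossed products (\cite{N13}*{Corollary 2.4}) to the action $\G\act X$, and then use the structural facts about almost minimal actions established in Propositions \ref{prop:tf} and \ref{prop:ef} to simplify the resulting parametrization. Recall that Neshveyev's result describes the extreme traces on $C(X)\rtimes_r\G$ in terms of pairs consisting of an ergodic $\G$-invariant measure $\mu$ on $X$ together with a measurable field of characters on the stabilizers $\G_x$ satisfying a suitable cocycle-type compatibility condition $\mu$-almost everywhere; for a countable abelian $\G$ the stabilizer map $x\mapsto\G_x$ is $\G$-invariant, so on an ergodic component it is $\mu$-a.e.\ equal to a fixed subgroup $\La\leq\G$, and the character field is then $\mu$-a.e.\ constant, given by a single $\chi\in\widehat{\La}$.

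First I would dichotomize the ergodic invariant measures according to the size of their support. If $\mu\in\partial\cP_\G(X)$ has full support, then by Proposition \ref{prop:ef} (whose hypothesis of topological freeness is supplied by Proposition \ref{prop:tf}) $\mu$ is essentially free, so the a.e.-constant stabilizer $\La$ is trivial; hence the only compatible character is the trivial one, and Neshveyev's formula collapses to the trace $\tau_\mu$ of \eqref{eq:at}. This accounts for the second block in the disjoint union. If instead $\mu$ does not have full support, then $\supp\mu$ is a proper closed invariant subset of $X$, hence finite by almost minimality, so $\supp\mu$ is a single finite orbit $[x]$ with $[x]\in\cC$ and $\mu$ is the uniform measure on $[x]$. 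In that case the a.e.-constant stabilizer is $\G_x$ itself, the compatibility condition on the character is vacuous (the orbit is finite, so there is no nontrivial measurable structure to respect), and each $\chi\in\widehat{\G_x}$ gives rise to a distinct extreme trace, which a direct computation on elements $fu_g$ identifies with $\tau_{x,\chi}$ of \eqref{eq:ft}. This accounts for the first block, indexed by $\bigsqcup_{[x]\in\cC}\{[x]\}\times\widehat{\G_x}$. Countability of $\cC$ is already part of Proposition \ref{prop:tf}.

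To finish, I would check that the correspondence is genuinely bijective: that the listed traces are pairwise distinct (the $\tau_\mu$ are distinguished by their restriction to $C(X)$, namely by $\mu$; the $\tau_{x,\chi}$ are distinguished from each other by $\mu$ and then by $\chi$ via the values $\tau_{x,\chi}(u_g)=\chi(g)/|\G x|$ for $g\in\G_x$; and the two families are separated because one is supported on finite orbits and the other on all of $X$), and that every extreme trace arises this way, which is exactly the content of Neshveyev's parametrization once the stabilizer/character data has been pinned down as above.

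The main obstacle I anticipate is verifying carefully that the data furnished by Neshveyev's theorem really does reduce to the clean finite-dimensional picture claimed — in particular, confirming that in the full-support case the measure-theoretic cocycle condition forces triviality of the character (via essential freeness, so that the relevant stabilizer is $\{e\}$ and $\widehat{\{e\}}$ is a point), and confirming that in the finite-orbit case the resulting trace is precisely the formula \eqref{eq:ft} rather than some twist of it. The rest is bookkeeping with the topology-free structure already extracted in Propositions \ref{prop:tf} and \ref{prop:ef}.
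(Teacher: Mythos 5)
Your proposal is correct and follows essentially the same route as the paper, which obtains the result as an immediate consequence of Propositions \ref{prop:tf} and \ref{prop:ef} combined with Neshveyev's parametrization \cite{N13}*{Corollary 2.4}, using ergodicity to make the stabilizer and character field a.e.\ constant and almost minimality to identify the non-full-support ergodic measures with uniform measures on finite orbits. (Only a trivial slip: with $f=1$ in \eqref{eq:ft} one gets $\tau_{x,\chi}(u_g)=\chi(g)$ for $g\in\G_x$, not $\chi(g)/|\G x|$, which does not affect your distinctness argument.)
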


\section{Some observations on $C^*(\Z[1/pq]\rtimes\Z^2)$}\label{ref:sr}

\subsection*{Preliminaries}
\label{ss:prelims}
Fix integers $p,q\geq 2$ and let $\alpha$ be the action of $\Z^2$ on $\Z[\frac{1}{pq}]$ given by $\alpha_{(n,m)}(x):=p^nq^mx$, for $n,m\in\Z$, and $x\in \Z[\frac{1}{pq}]$. Also let
\begin{equation*}
\varphi\colon \mathbb{T}\to \mathbb{T}, \quad z\mapsto z^{pq},
\end{equation*}
and consider the compact abelian group
\begin{equation*}
X:=\varprojlim(\mathbb{T},\varphi)=\{(x_n)_{n\in\Z_{\geq 0}}\in\prod_{n\in\Z_{\geq 0}}\mathbb{T}:\text{ for all } n\in\Z_{\geq0},x_n=\varphi(x_{n+1})\}.
\end{equation*}
There is a homeomorphism (which is also a group isomorphism) $H\colon\widehat{\Z[\frac{1}{pq}]}\to X$ given by 
\begin{equation}\label{eq:h}
H(\tau):=(\tau(\frac{1}{(pq)^n}))_{n\in\Z_{\geq 0}},
\end{equation}
for $\tau\in\widehat{\Z[\frac{1}{pq}]}$ (see, for example, the proof of \cite[Lemma 2.3]{S-T20}). 

Let $S\colon X\to X$ be the left shift map and $T_p,T_q\colon X\to X$ be the maps given by $T_p(x):=x^p$ and $T_q(x):=x^q$, for $x\in X$. Then $T_p$ and $T_q$ are the continuous group automorphisms of $X$ associated with the multiplication-by-$p$ and the multiplication-by-$q$ automorphisms of $\Z[\frac{1}{pq}]$ via the homeomorphism $H$. Moreover, $T_p$ and $T_q$ satisfy $T_p^{-1}=ST_q$ and $T_q^{-1}=ST_p$. Let $\beta:\Z^2\act X$ be given by 
\begin{equation}\label{eq:beta}
\beta_{(r,s)}=T_p^{-r}T_q^{-s},
\end{equation}
for $(r,s)\in\Z^2$. In particular, $\beta_{(1,1)}=S$. 

Furthermore, one can easily check that $H$ cojugates $\widehat{\alpha}\colon\Z^2\act \widehat{\Z[\frac{1}{pq}]}$ with $\beta$, where $\widehat{\alpha}\colon\Z^2\acts\widehat{\Z[\frac{1}{pq}]}$ is the action $\widehat{\alpha}_{(r,s)}(\chi):=\chi\circ\alpha_{(-r,-s)}$ for all $(r,s)\in\Z^2$ and $\chi\in \widehat{\Z[\frac{1}{pq}]}$. Hence $C^*(\Z[\frac{1}{pq}]\rtimes\Z^2)\simeq C(X)\rtimes\Z^2$.

 Let $\varphi_p,\varphi_q\colon\T\to\T$ be given by $\varphi_p(z):=z^p$ and $\varphi_q(z):=z^q$, for $z\in \T$, and
$$
\cI:=\{B\subseteq\T:\varphi_p(B)=\varphi_q(B)=B\}.
$$

Given $i\in\Z_{\geq 0}$, let $\pi_i\colon X\to \T$ be defined by $\pi_i((x_n)):=x_i$, for $(x_n)\in X$. 

\begin{proposition}\label{prop:inv} There is a bijection between closed $\Z^2$-invariant subsets $F\subseteq X$ and closed sets $B\in\cI$, which maps $F$ into $\pi_0(F)$. Moreover, a $\Z^2$-invariant subset $F\subset X$ is finite if and only if $\pi_0(F)$ is finite. 
\end{proposition}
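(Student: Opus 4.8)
The plan is to exploit the inverse-limit structure of $X=\varprojlim(\T,\varphi)$ together with the relation between the shift $S=\beta_{(1,1)}$ and the maps $T_p,T_q$. First I would set up the correspondence: given a closed $\Z^2$-invariant set $F\subseteq X$, put $B:=\pi_0(F)$. Since $F$ is $\Z^2$-invariant it is in particular $S$-invariant, and $\pi_0\circ S=\varphi\circ\pi_1$ combined with $\pi_n=\varphi^n\circ\pi_n$-type compatibility gives $\pi_0(F)=\pi_n(F)$ for all $n$; more importantly, $F$ is invariant under $T_p$ and $T_q$, and under $\pi_0$ these push forward to $\varphi_p$ and $\varphi_q$ acting on $\T$ (because $\pi_0(x^p)=\pi_0(x)^p$). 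Hence $\varphi_p(B)\subseteq B$ and $\varphi_q(B)\subseteq B$; using $T_p^{-1}=ST_q$ and $T_q^{-1}=ST_p$ one gets the reverse inclusions, so $\varphi_p(B)=\varphi_q(B)=B$, i.e. $B\in\cI$. Closedness of $B$ follows since $\pi_0$ is a continuous map out of a compact space. This defines the forward map $F\mapsto\pi_0(F)$.

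For the inverse, given a closed $B\in\cI$ I would set $F_B:=\varprojlim(B,\varphi|_B)=\{(x_n)\in X:x_n\in B\text{ for all }n\}$; since $\varphi(B)=B$ this is a nonempty closed subset of $X$, it is clearly $S$-invariant, and it is invariant under $T_p,T_q$ because $B$ is invariant under $\varphi_p,\varphi_q$ (checking componentwise, using that $\varphi$ commutes with $\varphi_p,\varphi_q$ and that $\varphi_p,\varphi_q$ restrict to bijections of $B$ by the equality $\varphi_p(B)=\varphi_q(B)=B$). So $F_B$ is a closed $\Z^2$-invariant subset of $X$. The two constructions are mutually inverse: $\pi_0(F_B)=B$ is immediate from surjectivity of $\varphi|_B\colon B\to B$ (so that any point of $B$ lifts to a full sequence in $F_B$), and conversely for a closed $\Z^2$-invariant $F$ one has $F\subseteq F_{\pi_0(F)}$ trivially, while the reverse inclusion uses $S$-invariance of $F$ to lift: any $(x_n)\in F_{\pi_0(F)}$ has each $x_n\in\pi_0(F)=\pi_n(F)$, and one argues by a compactness/diagonal argument that the point lies in $F$ itself. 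I would spell this last step out carefully, since it is the crux of bijectivity.

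For the ``moreover'' statement: if $\pi_0(F)=B$ is finite, then $F=F_B$ is a closed subset of the compact group $X$ which maps onto the finite set $B$ under each $\pi_n$; since the bonding maps $\varphi\colon B\to B$ form an inverse system of finite sets, $F_B=\varprojlim(B,\varphi)$ is finite (an inverse limit of finite sets along surjections indexed by $\N$ need not be finite in general, so here I must use that $\varphi|_B\colon B\to B$ is an eventually-periodic self-map of a finite set, hence the inverse limit is finite — concretely, $F_B$ injects into $B$ via a sufficiently high power of $\varphi$ once $\varphi|_B$ is bijective on its eventual image, and $\varphi_p(B)=B$ forces $\varphi|_B$ to be surjective hence bijective, so $F_B\cong B$ is finite). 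Conversely if $F$ is finite then $\pi_0(F)$ is finite as a continuous image.

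The main obstacle I anticipate is the reverse inclusion $F_{\pi_0(F)}\subseteq F$ in the proof that the maps are mutually inverse: one needs to promote the pointwise condition ``every coordinate lies in $\pi_0(F)$'' to membership in $F$, which requires combining invariance of $F$ under the shift $S$ with compactness of $F$ (a limit of partial lifts argument). Once that is in hand, everything else is routine bookkeeping with the inverse limit.
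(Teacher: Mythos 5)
Your proposal is correct and follows essentially the same route as the paper's proof: the forward map $F\mapsto\pi_0(F)$, the inverse $B\mapsto\bigl(\prod_{n\geq 0}B\bigr)\cap X$, and, for the finiteness claim, injectivity of $\pi_0$ on this set coming from bijectivity of $\varphi|_B$ on the finite set $B$. The crux you flag is resolved exactly as you anticipate, and more easily than a diagonal argument: if $y\in F$ satisfies $y_m=x_m$, then automatically $y_n=\varphi^{m-n}(y_m)=x_n$ for all $n\leq m$, so $\pi_m(F)=\pi_0(F)$ (from $S$-invariance) produces points of $F$ agreeing with $x$ on arbitrarily long initial segments and closedness of $F$ gives $x\in F$; also note that in the ``moreover'' clause $F$ need not be closed, so there one should use the inclusion $F\subseteq\bigl(\prod_{n\geq 0}\pi_0(F)\bigr)\cap X$ (which needs only invariance) rather than the equality $F=F_{\pi_0(F)}$.
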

\begin{proof}
    Given $F\subset X$ closed and $\Z^2$-invariant, it follows from the definition of the $\Z^2$-action in \eqref{eq:beta} that $\pi_0(F)\in\cI$. Since $F$ is shift-invariant (recall that $S=T_p^{-1}T_q^{-1}$), we also have that $\pi_0(F)=\pi_n(F)$ for every $n\geq 0$. 
    
    We claim that $F=\left(\prod_{n\geq 0}\pi_n(F)\right)\cap X$. Clearly, $F\subset\left(\prod_{n\geq 0}\pi_n(F)\right)\cap X$. Conversely, given $x\in \left(\prod_{n\geq 0}\pi_n(F)\right)\cap X$, for each $m\in \Z_{\geq 0}$ there is $y\in F$ such that $\pi_n(x)=\pi_n(y)$ for $n\leq m$. Since $F$ is closed, we conclude that $x\in F$, thus showing the claim. In particular, $F=\left(\prod_{n\geq 0}\pi_0(F)\right)\cap X$.

    Given $B\in \cI$, we have that $F:=\left(\prod_{n\geq 0}B\right)\cap X$ is closed, $\Z^2$-invariant and $\pi_0(F)=B$. This concludes the proof of the first claim.

    Suppose $B:=\pi_0(F)$ is finite. Given $z\in B$, since $B=B^{pq}$ and $B$ is finite, there is a unique $w\in B$ such that $w^{pq}=z$. This shows that any $x\in F$ is uniquely determined by $\pi_0(x)$. Therefore, $|B|=|F|$. 
\end{proof}

Recall that integers $p,q\geq 2$ are said to be multiplicatively independent if $p^r\neq q^s$ for all $r,s\in\Z_{>0}$. Furstenberg's theorem (\cite[Part IV]{F67}) and Proposition \ref{prop:inv} imply that $\Z^2\act X$ is almost minimal. By \cite[Lemma 2.2]{S-T20} (or Proposition \ref{prop:tf}), $\Z^2\act X$ is topologically free. For future reference, let us record these facts in the following result: 

\begin{lemma}
\label{lem:xpxq}
Let $p,q\geq 2$ be multiplicatively independent integers. Then $\Z^2\act X$ is almost minimal and topologically free.
\end{lemma}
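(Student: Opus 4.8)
The plan is to verify the two assertions of Lemma~\ref{lem:xpxq} by reducing them, via the homeomorphism $H$ and Proposition~\ref{prop:inv}, to facts about closed subsets of $\T$ invariant under both $\varphi_p$ and $\varphi_q$.

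For almost minimality, suppose $F\subsetneq X$ is a nonempty closed $\Z^2$-invariant subset. By Proposition~\ref{prop:inv}, $B:=\pi_0(F)$ is a nonempty closed set in $\cI$, i.e. $\varphi_p(B)=\varphi_q(B)=B$; moreover $F$ is finite if and only if $B$ is finite, and $B\subsetneq\T$ since $\pi_0$ restricted to $F$ is a bijection onto $B$ but $\pi_0$ is onto $\T$. Now I want to invoke Furstenberg's theorem. Furstenberg's result (\cite[Part IV]{F67}) states that if $B\subseteq\T$ is closed, infinite, and invariant under $z\mapsto z^p$ and $z\mapsto z^q$ for multiplicatively independent $p,q$, then $B=\T$. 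There is a small technical point: Furstenberg's theorem is usually phrased for the multiplicative semigroup $\Sigma$ generated by $p$ and $q$ acting by $z\mapsto z^n$, and ``$\Sigma$-invariant'' normally means $\varphi_n(B)\subseteq B$ rather than $\varphi_n(B)=B$; but the condition $\varphi_p(B)=\varphi_q(B)=B$ is strictly stronger, so the hypothesis of Furstenberg's theorem is certainly satisfied. Hence any such $B$ that is infinite must equal $\T$, contradicting $B\subsetneq\T$; therefore $B$ is finite, and so $F$ is finite. This proves almost minimality.

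For topological freeness, the cleanest route is to quote \cite[Lemma 2.2]{S-T20} directly, as the statement already offers. Alternatively, one can now deduce it from Proposition~\ref{prop:tf}: the action $\beta\colon\Z^2\acts X$ is faithful (for instance because $\widehat{\alpha}$ is faithful, as $\alpha$ is faithful on $\Z[\tfrac1{pq}]$ and Pontryagin duality preserves faithfulness, or directly because $T_p,T_q$ have infinite order and no nontrivial relation when $p,q$ are multiplicatively independent), $X$ is infinite, $\Z^2$ is countable abelian, and we have just shown the action is almost minimal; so Proposition~\ref{prop:tf} applies and yields topological freeness. Either way the argument is immediate once almost minimality is in hand.

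The only real subtlety — and the step I would be most careful about — is the precise form of Furstenberg's theorem being cited and matching its invariance hypothesis to the condition $\varphi_p(B)=\varphi_q(B)=B$ coming out of Proposition~\ref{prop:inv}. This is genuinely harmless here because equality is stronger than inclusion, so I would simply note that the sets arising from $\Z^2$-invariant $F\subseteq X$ are in particular $\{\varphi_p,\varphi_q\}$-invariant in Furstenberg's sense, and then apply his dichotomy (finite or all of $\T$). Everything else is bookkeeping with the identifications already set up in the preliminaries.
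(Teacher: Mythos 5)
Your proof is correct and takes essentially the same route as the paper, which obtains almost minimality from Furstenberg's theorem via Proposition~\ref{prop:inv} and topological freeness from \cite[Lemma 2.2]{S-T20} (or from Proposition~\ref{prop:tf}, after noting faithfulness). One small slip: $\pi_0|_F$ need not be injective when $F$ is infinite, so the claim that $F\subsetneq X$ forces $B=\pi_0(F)\subsetneq\T$ should instead be read off from the injectivity of the correspondence $F\mapsto\pi_0(F)$ asserted in Proposition~\ref{prop:inv} (equivalently, from $F=\bigl(\prod_{n\geq 0}\pi_0(F)\bigr)\cap X$).
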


Let $\cO$ be the set of finite minimal (among the non-empty sets) elements of $\cI$. Notice that $\cO$ is in one-to-one correspondence with the set of finite orbits of $\Z^2\act X$. For any $r\in\Z_{>0}$ coprime with $p$ and $q$, we have that $\{e^{\frac{2\pi i}{r}}:1\leq i<r\}\in\cI$. In particular, $\cO$ is infinite. The following is an immediate consequence of Lemma~\ref{lem:xpxq}, Theorem~\ref{thm:pis}, and the isomorphism $C^*(\Z[\frac{1}{pq}]\rtimes\Z^2)\simeq C(X)\rtimes\Z^2$. Note that if a point of $X$ has finite $\Z^2$-orbit, then its stabilizer subgroup is of finite index in $\Z^2$ and is thus isomorphic to $\Z^2$.

\begin{theorem}
Let $p ,q\geq 2$ be multiplicatively independent integers. Then the primitive ideal space of $C^*(\Z[\frac{1}{pq}]\rtimes\Z^2)$ is homeomorphic to
$$P:=\left(\cO\times\T^2\right)\sqcup\{\infty\},$$
where the closed subsets of $P$ are $P$ and finite unions of elements of the collection $\{\{B\}\times F:\text{$B\in\mathcal{O}$, $F$ a closed subset of $\T^2$}\}$.
\end{theorem}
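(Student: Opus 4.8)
The plan is to verify the hypotheses of Theorem~\ref{thm:pis} for the action $\beta\colon\Z^2\acts X$ of \eqref{eq:beta}, and then to transport its conclusion along two natural identifications together with the isomorphism $C^*(\Z[\frac{1}{pq}]\rtimes\Z^2)\simeq C(X)\rtimes\Z^2$ recorded in the preliminaries. First I would check the standing hypotheses: $\Z^2$ is countable abelian; $X$ is compact Hausdorff and is second countable, being a closed subspace of the countable product $\prod_{n\geq 0}\T$; and $X$ is infinite because $\pi_0\colon X\to\T$ is surjective (each $\varphi$ is surjective on the divisible group $\T$). By Lemma~\ref{lem:xpxq} the action is almost minimal and topologically free, and topological freeness forces faithfulness, since $\Fix_g=X$ would give $\interior\Fix_g=X\neq\emptyset$. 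Thus Theorem~\ref{thm:pis} applies and gives a homeomorphism
$$\Prim\bigl(C(X)\rtimes\Z^2\bigr)\;\cong\;\Bigl(\bigsqcup_{[x]\in\mathcal{C}}\{[x]\}\times\widehat{(\Z^2)_x}\Bigr)\sqcup\{\infty\},$$
where $\mathcal{C}$ is the countable set of finite orbits, the closed sets being the whole space together with finite unions of sets $\{[x]\}\times F$ with $F\subseteq\widehat{(\Z^2)_x}$ closed.

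It remains to make two identifications. For the index set, Proposition~\ref{prop:inv} shows that $F\mapsto\pi_0(F)$ is a bijection from the closed $\Z^2$-invariant subsets of $X$ onto the closed members of $\cI$, carrying finite sets to finite sets; restricting to single orbits identifies $\mathcal{C}$ with the set $\cO$ of minimal nonempty finite elements of $\cI$, as noted in the preliminaries. For the fibres, if $[x]\in\mathcal{C}$ then $(\Z^2)_x$ has finite index in $\Z^2$, since $\Z^2/(\Z^2)_x$ is in bijection with the finite orbit $[x]$; hence $(\Z^2)_x$ is free abelian of rank $2$, and any choice of isomorphism $(\Z^2)_x\cong\Z^2$ induces a homeomorphism $\theta_{[x]}\colon\widehat{(\Z^2)_x}\to\widehat{\Z^2}=\T^2$.

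Putting these together, the map sending $([x],\chi)$ to $(\pi_0([x]),\theta_{[x]}(\chi))$ and $\infty$ to $\infty$ is a bijection onto $P$; since each $\theta_{[x]}$ is a homeomorphism it sends the closed subsets of $\widehat{(\Z^2)_x}$ precisely onto the closed subsets of $\T^2$, so the two families of closed sets correspond and the map is a homeomorphism. Composing with $C^*(\Z[\frac{1}{pq}]\rtimes\Z^2)\simeq C(X)\rtimes\Z^2$ yields the statement. I expect no real obstacle beyond bookkeeping; the only point worth a word of care is that the identification $\widehat{(\Z^2)_x}\cong\T^2$ is non-canonical, which is harmless since only a homeomorphism of $\Prim$ is claimed and a homeomorphism of the fibres transports the closed-set description intact.
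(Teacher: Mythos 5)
Your proposal is correct and follows essentially the same route as the paper: verify the hypotheses of Theorem~\ref{thm:pis} for $\Z^2\acts X$ via Lemma~\ref{lem:xpxq} (the paper likewise leaves faithfulness and second countability implicit), identify the finite orbits with $\cO$ through Proposition~\ref{prop:inv}, and identify the duals of the finite-index stabilizers, each free abelian of rank $2$, with $\T^2$. The paper states all of this as an immediate consequence, so your write-up is simply a more detailed version of the same argument.
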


Let $\cP_{p,q}(\T)$ be the set of regular probability measures $\mu$ on $\T$ such that $(\varphi_p)_*\mu=(\varphi_q)_*\mu=\mu$. Since $\varphi_{p}\circ\pi_0=\pi_0\circ T_p$ and $\varphi_q\circ\pi_0=\pi_0\circ T_q$, it follows that $(\pi_0)_*\colon \cP_{\Z^2}(X)\to \cP_{p,q}(\T)$ is a well-defined affine map. The following result is known (see \cite[Proposition 4.1]{HW17}), but for the reader's convenience we provide a different proof.
\begin{lemma}\label{lem:ob}
The map $(\pi_0)_*\colon \cP_{\Z^2}(X)\to \cP_{p,q}(\T)$ is an affine isomorphism.
\end{lemma}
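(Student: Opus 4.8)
The plan is to construct an explicit inverse to $(\pi_0)_*$ using the inverse limit structure of $X$. First I would observe that surjectivity of $(\pi_0)_*$ is easy: given $\mu\in\cP_{p,q}(\T)$, I want a $\Z^2$-invariant measure on $X$ mapping to $\mu$. For each $n$, the map $\pi_n\colon X\to\T$ relates to $\pi_0$ via $\pi_0=\varphi^n\circ\pi_n$ where $\varphi(z)=z^{pq}$, so I should build a measure $\nu$ on $X$ whose pushforward under each coordinate projection $\pi_n$ is a suitable measure $\mu_n$ on $\T$ with $(\varphi)_*\mu_{n+1}=\mu_n$ and $\mu_0=\mu$. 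The natural candidate: note that $\varphi=\varphi_p\circ\varphi_q=\varphi_q\circ\varphi_p$, and since $\mu$ is both $\varphi_p$- and $\varphi_q$-invariant, I want $\mu_n$ to be a ``$(pq)^n$-th root'' of $\mu$ that is still invariant under both $\varphi_p$ and $\varphi_q$. The cleanest way: pull back via the Haar measure on the fibres. Concretely, writing $\psi_n\colon\T\to\T$, $z\mapsto z^{(pq)^n}$, define $\mu_n$ by $\int f\,d\mu_n := \int_\T \big(\int_{\psi_n^{-1}(w)} f\,d\lambda_w\big)\,d\mu(w)$ where $\lambda_w$ is the uniform probability measure on the $(pq)^n$ points of $\psi_n^{-1}(w)$; equivalently $\mu_n$ is characterized by $\widehat{\mu_n}(k)=\widehat\mu(k/(pq)^n)$ when $(pq)^n\mid k$ and $0$ otherwise, in terms of Fourier coefficients. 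One checks $(\varphi)_*\mu_{n+1}=\mu_n$ directly (in Fourier coefficients this is immediate) and that each $\mu_n$ is $\varphi_p$- and $\varphi_q$-invariant, again transparent on Fourier coefficients since $\mu$ is. The Kolmogorov extension theorem (or the universal property of the inverse limit of measure spaces) then produces $\nu\in\cP(X)$ with $(\pi_n)_*\nu=\mu_n$ for all $n$, and in particular $(\pi_0)_*\nu=\mu$. Invariance of $\nu$ under $\beta$ follows since $\beta_{(1,1)}=S$ is the shift and $(\pi_n)_*(S_*\nu)=(\pi_{n+1})_*\nu=\mu_{n+1}$... wait, that gives $(\pi_n)_*(S_*\nu)=\mu_{n+1}\neq\mu_n$ in general, so I should instead argue $\beta$-invariance via $T_p,T_q$: since $T_p,T_q$ are the automorphisms of $X$ induced by multiplication by $p,q$, and $\mu_n$ is $\varphi_p,\varphi_q$-invariant at every level with the correct compatibility, one checks $(T_p)_*\nu=\nu$ and $(T_q)_*\nu=\nu$ by verifying the coordinate projections agree.

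Next I would prove injectivity, which is where the real content lies. Suppose $\nu,\nu'\in\cP_{\Z^2}(X)$ with $(\pi_0)_*\nu=(\pi_0)_*\nu'$. Since $\nu,\nu'$ are $S$-invariant and $\pi_0=\varphi^n\circ\pi_n$, I claim $(\pi_n)_*\nu$ is forced: indeed $(\pi_n)_*\nu=(\pi_0)_*(S^n_*\nu)$? No — $\pi_0\circ S^n=\pi_n$, so $(\pi_n)_*\nu=(\pi_0\circ S^n)_*\nu=(S^n_*\nu\text{ pushed by }\pi_0)=(\pi_0)_*\nu$ using $S$-invariance. Hence $(\pi_n)_*\nu=(\pi_0)_*\nu=(\pi_0)_*\nu'=(\pi_n)_*\nu'$ for every $n$. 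Since the $\pi_n$ generate the Borel $\sigma$-algebra of $X$ (they separate points, $X$ is second countable compact, and cylinder sets form a basis), two Borel probability measures agreeing on all cylinder sets are equal, so $\nu=\nu'$. This shows $(\pi_0)_*$ is injective, completing the bijection; affinity is clear from the definitions, and continuity for the weak* topologies is automatic since $(\pi_0)_*$ is pushforward along a continuous map and the inverse is continuous by compactness (or one can simply note an affine continuous bijection between compact convex sets is an affine homeomorphism).

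The step I expect to be the main obstacle is verifying that the measure $\nu$ produced by the extension theorem is genuinely $\beta$-invariant, i.e. invariant under $T_p$ and $T_q$ separately, not merely under the shift $S=T_p^{-1}T_q^{-1}$. The subtlety is that $T_p$ and $T_q$ do not act coordinatewise on $X$ in a way that directly permutes the $\pi_n$'s the way $S$ does, so one must go back to the concrete description of $T_p,T_q$ via the relations $T_p^{-1}=ST_q$, $T_q^{-1}=ST_p$ and check invariance of the family $(\mu_n)$ under the induced maps, using the $\varphi_p$- and $\varphi_q$-invariance of $\mu$ at every level. An alternative cleaner route, which I would likely adopt to sidestep this, is to build $\nu$ via its Fourier transform directly: $\widehat X\cong\Z[\tfrac1{pq}]$ and one defines $\widehat\nu(x):=\widehat\mu(x)$ for $x\in\Z\subseteq\Z[\tfrac1{pq}]$ — but that is not well-defined on all of $\Z[\tfrac1{pq}]$, so one instead sets $\widehat\nu(k/(pq)^n):=\widehat\mu(k/(pq)^n)$ only when this lies in... again this only makes sense for $\mu$ on the right subgroup. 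The honest fix is: since $\widehat\mu\colon\Z\to\C$ is a positive-definite function satisfying $\widehat\mu(pk)=\widehat\mu(k)=\widehat\mu(qk)$, it descends consistently, and $\beta$-invariance of $\nu$ translates to $\widehat\alpha$-invariance of $\widehat\nu$, which is immediate from these relations on $\Z$. Framing the whole proof in the Fourier/Pontryagin-dual picture turns every verification into a one-line identity among Fourier coefficients, and that is the approach I would write up.
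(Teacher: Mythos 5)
Your proposed construction of the inverse is genuinely broken at the step you yourself flagged, and the fix you suggest cannot work. The fibre-averaged system $\mu_n$ (Fourier coefficients $\widehat{\mu_n}(k)=\widehat\mu(k/(pq)^n)$ for $(pq)^n\mid k$, zero otherwise) is a compatible system, but the resulting inverse-limit measure $\nu$ is not $\Z^2$-invariant, and this cannot be repaired by ``arguing via $T_p,T_q$ instead of $S$'': since $S=T_p^{-1}T_q^{-1}$, invariance under $T_p$ and $T_q$ forces $S$-invariance, and $S$-invariance forces $\mu_{n+1}=\mu_n$ for all $n$ (because $\pi_n\circ S=\pi_{n+1}$), which your system violates whenever $\mu$ is not Lebesgue. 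Concretely, for $\mu=\delta_1$ your $\nu$ is the Haar measure of the subgroup $\{x\in X:x_0=1\}$; in Fourier terms $\widehat\nu$ is the extension of $\widehat\mu$ by zero from $\Z$ to $\Z[\frac{1}{pq}]$, and $T_p$-invariance would require $\widehat\nu(k)=\widehat\nu(k/p)=0$ for $p\nmid k$, which fails. The correct choice inside the inverse-limit picture is the \emph{constant} system $\mu_n=\mu$ for all $n$: it is compatible precisely because $\mu$ is invariant under $\varphi=\varphi_p\circ\varphi_q$, and it manifestly yields an $S$-, $T_p$- and $T_q$-invariant limit. This is exactly what the paper does, phrased as extending the state $\pi_n^*(f)\mapsto\int f\,d\mu$ from the dense union $\bigcup_n\pi_n^*(C(\T))$ to $C(X)$.

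Your closing ``Fourier/Pontryagin-dual'' sketch, if written out, is correct and is in fact the dual formulation of the paper's argument rather than a cleaner verification of your first construction: extending $\widehat\mu$ from $\Z$ to $\Z[\frac{1}{pq}]=\varinjlim(\Z\xrightarrow{\times pq}\Z)$ by $\widehat\nu\bigl((pq)^{-n}k\bigr):=\widehat\mu(k)$ (well defined by $\widehat\mu(pqk)=\widehat\mu(k)$, positive definite because every finite subset of $\Z[\frac{1}{pq}]$ lies in some $(pq)^{-n}\Z\cong\Z$, then Bochner) produces precisely the measure with $(\pi_n)_*\nu=\mu$ for all $n$, not your fibre-averaged one; so you should present it as a replacement of, not a shortcut through, the Kolmogorov-extension construction, and you still owe the (one-line) verifications of well-definedness and positive-definiteness that ``descends consistently'' glosses over. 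The injectivity half of your argument --- $\pi_0\circ S^n=\pi_n$, hence $(\pi_n)_*\nu=(\pi_0)_*\nu$ for invariant $\nu$, and cylinder functions determine the measure --- is correct and coincides with the paper's.
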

\begin{proof}
 We will show that $(\pi_0)_*$ is bijective by constructing an inverse. Given $n\in\Z_{\geq 0}$, let $\pi_n^*\colon C(\T)\to C(X)$ be the adjoint map given by $\pi_n^*(f):=f\circ\pi_n$ for $f\in C(\T)$, and $A_n:=\pi_n^*(C(\T))$. Notice that $\pi_n^*$ is is injective. For every $n\in\Z_{\geq 0}$, we have that 
\begin{equation}\label{eq:inv}\pi_n^*=\pi_{n+1}^*\circ\varphi^*.
\end{equation}
In particular, $A_n\subseteq A_{n+1}$.

Fix $\mu\in\cP_{p,q}(\T)$. By \eqref{eq:inv} and invariance of $\mu$, there exists a bounded, positive and unital linear functional $\sigma$ on $\bigcup_{n\in\Z_{\geq 0}}A_n$ such that, for $f\in C(\T)$ and $n\in\Z_{\geq 0}$, we have that $\sigma(\pi_n^*(f))=\int f\,d\mu$. Since $C(X)=\overline{\bigcup_{n\in\Z_{\geq 0}}A_n}$, we can extend $\sigma$ to a state on $C(X)$. Let $\psi(\mu)\in \cP(X)$ be the measure corresponding to $\sigma$. By $\times p,\times q$-invariance of $\mu$, we have that $\psi(\mu)$ is $\Z^2$-invariant. 

By $T_p,T_q$-invariance, we have that $(\pi_n)_*|_{\cP_{\Z^2}(X)}=(\pi_0)_*|_{\cP_{\Z^2}(X)}$ for any $n\geq 0$. This shows that any $\nu\in\cP_{\Z^2}$ is uniquely determined by its restriction to $A_0$. Using this fact, it is easy to check that $\psi\colon \cP_{p,q}(\T)\to \cP_{\Z^2}(X) $ is an inverse for $(\pi_0)_*|_{\cP_{\Z^2}(X)}$. 
\end{proof}

Let $\mathrm{Ev}\colon C^*(\Z[\frac{1}{pq}])\to C(\widehat{\Z[\frac{1}{pq}]})$ be the isomorphism given by point-evaluation, that is, given $g\in\Z[\frac{1}{pq}]$ and $\chi\in\widehat{\Z[\frac{1}{pq}]}$, we have that $\mathrm{Ev}(u_g)(\chi)=\chi(g)$. One can easily check that $\mathrm{Ev}$ conjugates the canonical actions $\Z^2\act C^*(\Z[\frac{1}{pq}])$ and $\Z^2\act C(\widehat{\Z[\frac{1}{pq}]})$.

\begin{proposition}\label{prop:tpq}
Let $p ,q\geq 2$ be multiplicatively independent integers. Then there is a bijection between $\partial T(C^*(\Z[\frac{1}{pq}]\rtimes\Z^2))$ and
\[\left(\cO\times\T^2\right)\sqcup\{\mu\in\partial\cP_{p,q}(\T):\mu\text{ has full support}\}\]
as in Proposition \ref{prop:taf}. This bijection takes the non-faithful tracial states into $\cO\times\T^2$, and takes each faithful tracial state $\tau$ to $\mu\in\partial\cP_{p,q}(\T)$ with full support such that, for $n\in\Z,$ 
\begin{equation}\label{eq:cha}
\tau(u_{(n,0,0)})=\int_\T z^n\,d\mu(z).
\end{equation}
\end{proposition}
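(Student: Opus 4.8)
The plan is to apply Proposition~\ref{prop:taf} to the action $\Z^2\act X$, which is faithful and almost minimal by Lemma~\ref{lem:xpxq}, and then transport everything along the identifications $C^*(\Z[\frac{1}{pq}]\rtimes\Z^2)\simeq C(X)\rtimes\Z^2$ and $(\pi_0)_*\colon\cP_{\Z^2}(X)\xrightarrow{\sim}\cP_{p,q}(\T)$ from Lemma~\ref{lem:ob}. Proposition~\ref{prop:taf} already gives a bijection between $\partial T(C(X)\rtimes\Z^2)$ and $\left(\bigsqcup_{[x]\in\cC}\{[x]\}\times\widehat{\Z^2_x}\right)\sqcup\{\nu\in\partial\cP_{\Z^2}(X):\nu\text{ has full support}\}$. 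First I would identify the finite-orbit part: finite orbits of $\Z^2\act X$ correspond, via $\pi_0$, to elements of $\cO$ (as noted just before the statement of the primitive ideal theorem), and for each finite orbit the stabilizer is a finite-index subgroup of $\Z^2$, hence isomorphic to $\Z^2$, so $\widehat{\Z^2_x}\cong\T^2$; this gives the $\cO\times\T^2$ summand. For the other summand, since $(\pi_0)_*$ is an affine isomorphism it restricts to a bijection $\partial\cP_{\Z^2}(X)\to\partial\cP_{p,q}(\T)$ which preserves the full-support property (because $\pi_0$ is surjective and, conversely, $\supp\nu$ is the smallest closed $\Z^2$-invariant set carrying $\nu$, which by Proposition~\ref{prop:inv} is all of $X$ iff $\pi_0(\supp\nu)=\supp\mu=\T$). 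Composing, we get the claimed bijection.

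Next I would pin down which tracial states are faithful. The non-faithful ones are exactly the $\tau_{x,\chi}$ with $[x]\in\cO$: indeed $\tau_{x,\chi}$ is concentrated on the orbit $[x]$, so it annihilates any $f\in C(X)$ vanishing on $[x]$, and such nonzero $f$ exist since $[x]$ is a proper closed subset of the infinite space $X$; hence $\tau_{x,\chi}$ is not faithful. Conversely, if $\mu\in\partial\cP_{p,q}(\T)$ has full support, then $\nu:=\psi(\mu)\in\partial\cP_{\Z^2}(X)$ has full support, and by Proposition~\ref{prop:ef} (applicable since $\Z^2\act X$ is topologically free) $\nu$ is essentially free; by the standard characterization of faithfulness of $\tau_\nu$ on a crossed product by a topologically free action with an essentially free invariant measure of full support (e.g.\ via the conditional expectation $E\colon C(X)\rtimes\Z^2\to C(X)$ and $\tau_\nu=\int E(\cdot)\,d\nu$, together with faithfulness of $E$ on positive elements in the reduced crossed product — here the full group $C^*$-algebra coincides with the reduced one by amenability of $\Z^2$), $\tau_\nu$ is faithful. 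So faithfulness matches up with the full-support measure summand, as asserted.

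Finally I would verify the normalization formula~\eqref{eq:cha}. Under $C^*(\Z[\frac{1}{pq}]\rtimes\Z^2)\simeq C(X)\rtimes\Z^2$, the generator $u_{(n,0,0)}$ corresponding to $n\in\Z[\frac{1}{pq}]\subseteq\Z[\frac{1}{pq}]\rtimes\Z^2$ is sent, via $\mathrm{Ev}$ composed with $H$, to the function $\pi_0^*(z\mapsto z^n)=\pi_0(\cdot)^n\in C(X)$ (using that $H(\tau)_0=\tau(1)$... more precisely that the character $n\in\Z[\frac1{pq}]$ evaluated at $\chi$ is $\chi(n)$, which under $H$ becomes the $0$-th coordinate raised to the $n$-th power for integer $n$); here I should be a little careful to get the correct coordinate, but this is exactly the kind of check already implicit in the setup around \eqref{eq:h}. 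Then, for the trace $\tau$ corresponding to $\mu$, i.e.\ $\tau=\tau_\nu$ with $\nu=\psi(\mu)$, formula~\eqref{eq:at} gives $\tau(u_{(n,0,0)})=\int_X \pi_0(x)^n\,d\nu(x)=\int_\T z^n\,d((\pi_0)_*\nu)(z)=\int_\T z^n\,d\mu(z)$, which is~\eqref{eq:cha}. The main obstacle I anticipate is not any single hard step but rather the bookkeeping: carefully matching the full-support condition across $(\pi_0)_*$ and across $\psi$, and correctly tracking the isomorphism $C^*(\Z[\frac1{pq}]\rtimes\Z^2)\simeq C(X)\rtimes\Z^2$ down to the level of the explicit generators $u_{(n,0,0)}$ so that~\eqref{eq:cha} comes out with $z^n$ and not some reindexed power; the faithfulness dichotomy, while conceptually routine, also needs the reduced$=$full identification for $\Z^2$ to be invoked cleanly.
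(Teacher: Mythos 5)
Your proposal is correct and follows essentially the same route as the paper: apply Proposition~\ref{prop:taf} to $\Z^2\act X$, match the finite-orbit part with $\cO\times\T^2$ and the full-support measures via $(\pi_0)_*$ from Lemma~\ref{lem:ob}, and verify \eqref{eq:cha} through $\mathrm{Ev}$ and $H$ using $H(\chi)_0^n=\chi(n)$. You merely make explicit some points the paper leaves implicit (faithfulness of $\tau_\nu$ via the conditional expectation and amenability of $\Z^2$, and preservation of full support under $(\pi_0)_*$ via Proposition~\ref{prop:inv}), which is fine.
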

\begin{proof}
Identify $C^*(\Z[\frac{1}{pq}]\rtimes\Z^2)$ with $C(X)\rtimes\Z^2$. Notice that the tracial states in \eqref{eq:ft} are not faithful, whereas, given $\mu\in\cP_{\Z^2}(X)$ with full support, the tracial states as in \eqref{eq:at} are faithful. Together with Proposition \ref{prop:taf}, this shows the first claims.

Let us prove \eqref{eq:cha}. Identify $C^*(\Z[\frac{1}{pq}]\rtimes\Z^2)$ with $C^*(\Z[\frac{1}{pq}])\rtimes\Z^2$. By Proposition \ref{prop:taf}, there exists $\nu\in\partial \cP_{\Z^2}(X)$ with full support such that 
\begin{equation}\label{eq:mu}
\tau\circ\mathrm{Ev}^{-1} (f\circ H)=\int_X f\,d\nu
\end{equation}
for any $f\in C(X)$. Let $\mu:=(\pi_0)_*(\nu)$. Given $n\in\Z$, we have that

\begin{equation}\label{eq:tec}
\int_\T z^n\,d\mu(z)=\int_X x_0^n d\nu(x).
\end{equation}
Given $\chi\in\widehat{\Z[\frac{1}{pq}]}$, we also have that $H(\chi)_0^n=\chi(n)=\mathrm{Ev}(u_n)(\chi)$ (here, $H$ is the isomorphism defined in \eqref{eq:h}). Together with \eqref{eq:mu} and \eqref{eq:tec}, this concludes the proof of \eqref{eq:cha}. 
\end{proof}

\subsection*{Tracial states}
A group $\G$ is said to be \emph{icc} if the conjugacy class of any $g\in\G\setminus\{e\}$ is infinite. Here, icc stands for \emph{infinite conjugacy classes}.

The \emph{canonical trace} on $C^*_r(\G)$ is the faithful tracial state $\tau$ on $C^*_r(\G)$ which satisfies $\tau(u_g)=0$ for all $g\in\G\setminus\{e\}$. Recall that $\G$ is icc if and only if the canonical trace is an extreme tracial state on $C^*_r(\G)$ (\cite[Propositions 7.A.1 and 11.C.3]{BdlH20}). 

\begin{lemma} Let $p,q\geq 2$ be multiplicatively independent integers. Then $\Z[\frac{1}{pq}]\rtimes\Z^2$ is icc. 
\end{lemma}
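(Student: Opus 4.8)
The plan is to verify directly that every non-identity element of $\G:=\Z[\frac{1}{pq}]\rtimes\Z^2$ has an infinite conjugacy class. Write a general element as $(x,(n,m))$ with $x\in\Z[\frac1{pq}]$ and $(n,m)\in\Z^2$, where the group law is $(x,(n,m))(y,(k,l))=(x+\alpha_{(n,m)}(y),(n+k,m+l))=(x+p^nq^m y,(n+k,m+l))$. I would split into two cases according to whether the $\Z^2$-part is trivial.

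\textbf{Case 1: the element is $(x,(0,0))$ with $x\neq 0$.} Conjugating by $(0,(n,m))$ gives
\[
(0,(n,m))(x,(0,0))(0,(-n,-m))=(p^nq^m x,(0,0)).
\]
Since $p^nq^m$ ranges over infinitely many distinct values as $(n,m)$ ranges over $\Z^2$ (indeed over all of $\Z[\frac1{pq}]_{>0}$, an infinite set, and these are distinct because $\Z[\frac1{pq}]$ is an integral domain), the conjugacy class of $(x,(0,0))$ is infinite.

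\textbf{Case 2: the element is $(x,(n,m))$ with $(n,m)\neq (0,0)$.} Here I would conjugate by elements $(y,(0,0))$ of the abelian normal subgroup $\Z[\frac1{pq}]$:
\[
(y,(0,0))(x,(n,m))(-y,(0,0))=(y+x-p^nq^m y,(n,m))=\bigl(x+(1-p^nq^m)y,(n,m)\bigr).
\]
It therefore suffices to check that $(1-p^nq^m)y$ takes infinitely many values as $y$ runs over $\Z[\frac1{pq}]$, i.e.\ that $1-p^nq^m\neq 0$. This is exactly where multiplicative independence of $p$ and $q$ enters: $p^nq^m=1$ with $(n,m)\neq(0,0)$ would force (after clearing signs) a relation $p^r=q^s$ with $r,s$ not both zero, and if one of $r,s$ is zero the other factor is $\geq 2$, a contradiction; and if both are positive this contradicts multiplicative independence. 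Hence $1-p^nq^m\neq 0$, so $(1-p^nq^m)\Z[\frac1{pq}]$ is infinite and the conjugacy class of $(x,(n,m))$ is infinite.

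These two cases cover every element of $\G\setminus\{e\}$, so $\G$ is icc. I do not expect any real obstacle here; the only point requiring care is the elementary number-theoretic observation in Case 2 that $p^nq^m=1$ is impossible for $(n,m)\neq(0,0)$ when $p,q\geq 2$ are multiplicatively independent, and the bookkeeping of signs of $n,m$ (rewriting $p^nq^m=1$ as an identity of positive integers by moving negative exponents to the other side). Everything else is a direct computation in the semidirect product.
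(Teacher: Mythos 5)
Your proof is correct and follows essentially the same route as the paper: the same two conjugation computations (acting by $\Z^2$ to rescale the $\Z[\frac{1}{pq}]$-coordinate, and acting by $\Z[\frac{1}{pq}]$ to translate by $(1-p^nq^m)y$), with multiplicative independence entering at the same point to rule out $p^nq^m=1$; the only difference is the cosmetic one of splitting cases by the $\Z^2$-coordinate rather than the $\Z[\frac{1}{pq}]$-coordinate. One tiny inaccuracy: your parenthetical claim that $p^nq^m$ ranges over all of $\Z[\frac{1}{pq}]_{>0}$ is false (e.g.\ $5\in\Z[\frac16]_{>0}$ is not of that form), but it is also unnecessary, since the values $p^n$, $n\geq 0$, already give infinitely many distinct multipliers.
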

\begin{proof}
Given $x\in\Z[\frac{1}{pq}]\setminus\{0\}$, $y\in\Z^2$ and $n\in\Z$, we have that $$(0,n,0)(x,y)(0,-n,0)=(p^nx,y).$$ Therefore, the conjugacy class of $(x,y)$ is infinite.

Given $(m,n)\in\Z^2\setminus\{0,0\}$ and $x\in \Z[\frac{1}{pq}]$, we have that 
$$(x,0)(0,m,n)(-x,0)=((1-p^mq^n)x,m,n).$$
From multiplicative independence of $p$ and $q$, we conclude that the conjugacy class of $(0,m,n)$ is infinite as well.
\end{proof}

Since the canonical trace corresponds to the Lebesgue measure on $\T$ under the bijection of Proposition \ref{prop:tpq}, the following holds:

\begin{corollary}\label{thm:tf}
Let $p,q\geq 2$ be multiplicatively independent integers.  Then the canonical trace is the only faithful extreme tracial state on $C^*(\Z[\frac{1}{pq}]\rtimes \Z^2)$ if and only if Furstenberg's $\times p,\times q$ conjecture holds.
\end{corollary}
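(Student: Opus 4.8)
The plan is to read the statement off Proposition~\ref{prop:tpq}. That proposition identifies $\partial T(C^*(\Z[\tfrac1{pq}]\rtimes\Z^2))$ with $(\cO\times\T^2)\sqcup\{\mu\in\partial\cP_{p,q}(\T):\mu\text{ has full support}\}$, the faithful extreme traces corresponding precisely to the full-support measures in $\partial\cP_{p,q}(\T)$ via the Fourier formula \eqref{eq:cha}. So it remains only to check two things: (i) the canonical trace is a faithful extreme tracial state, and it corresponds under this bijection to the Lebesgue (normalized Haar) measure $\la$ on $\T$; and (ii) for $\mu\in\partial\cP_{p,q}(\T)$, having full support is the same as having infinite support, and $\partial\cP_{p,q}(\T)$ is exactly the set of ergodic $\times p,\times q$-invariant measures. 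Granting these, the corollary is a transcription: ``the canonical trace is the only faithful extreme tracial state'' becomes ``$\la$ is the only full-support measure in $\partial\cP_{p,q}(\T)$'', which by (ii) becomes ``$\la$ is the only ergodic $\times p,\times q$-invariant measure of infinite support'', i.e.\ Furstenberg's conjecture.

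For (i), I would first note that $\Z[\tfrac1{pq}]\rtimes\Z^2$ is metabelian, hence amenable, so $C^*(\Z[\tfrac1{pq}]\rtimes\Z^2)=C^*_r(\Z[\tfrac1{pq}]\rtimes\Z^2)$; thus the canonical trace is defined and faithful, and it is extreme by the preceding lemma together with \cite[Propositions 7.A.1 and 11.C.3]{BdlH20}. (In particular both sides of the equivalence concern nonempty collections.) To match it with $\la$: under $C^*(\Z[\tfrac1{pq}]\rtimes\Z^2)\simeq C^*(\Z[\tfrac1{pq}])\rtimes\Z^2\simeq C(X)\rtimes\Z^2$ via $\mathrm{Ev}$ and $H$, the canonical trace annihilates the unitaries implementing the $\Z^2$-action (since $u_{(x,m,n)}$ is a non-identity group element when $(m,n)\neq(0,0)$) and restricts on $C^*(\Z[\tfrac1{pq}])=C(\widehat{\Z[\tfrac1{pq}]})$ to integration against the Haar measure of $\widehat{\Z[\tfrac1{pq}]}\simeq X$, by orthogonality of characters; so it equals the trace $\tau_\nu$ of \eqref{eq:at} with $\nu$ the Haar measure of $X$. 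Since $\la=(\pi_0)_*\nu$ is ergodic for $\times p$ alone (the map $z\mapsto z^p$ being exact), hence for $\times p,\times q$, we have $\la\in\partial\cP_{p,q}(\T)$ and $\nu\in\partial\cP_{\Z^2}(X)$ with full support, and Lemma~\ref{lem:ob} together with Proposition~\ref{prop:tpq} gives that the canonical trace corresponds to $\la$. (The direction ``canonical trace $\Rightarrow\la$'' can also be read directly off \eqref{eq:cha}, since $\int_\T z^n\,d\mu=\tau(u_{(n,0,0)})=0$ for all $n\neq0$ forces $\mu=\la$.)

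For (ii): Furstenberg's conjecture concerns ergodic $\times p,\times q$-invariant probability measures, and these are precisely the extreme points of $\cP_{p,q}(\T)$ (standard; via the affine isomorphism $(\pi_0)_*$ of Lemma~\ref{lem:ob} they correspond to the $\beta$-ergodic measures on $X$). Now fix $\mu\in\partial\cP_{p,q}(\T)$ and let $\nu\in\partial\cP_{\Z^2}(X)$ be the measure with $(\pi_0)_*\nu=\mu$. Since $\beta\colon\Z^2\act X$ is by homeomorphisms, $\supp\nu$ is a closed $\Z^2$-invariant subset of $X$, and $\supp\mu=\pi_0(\supp\nu)$. By Lemma~\ref{lem:xpxq}, the action $\Z^2\act X$ is almost minimal, so $\supp\nu$ is finite or equal to $X$; and by Proposition~\ref{prop:inv}, $\supp\nu$ is finite exactly when $\supp\mu$ is. Hence if $\supp\mu$ is infinite then $\supp\nu=X$ and $\supp\mu=\pi_0(X)=\T$; the converse is trivial since $\T$ is infinite. (Alternatively, one checks directly that $\supp\mu\in\cI$ and applies Furstenberg's topological theorem \cite[Part IV]{F67}.) This establishes (ii), and the corollary follows as in the first paragraph.

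The statement poses no deep obstacle---it is essentially bookkeeping on top of Proposition~\ref{prop:tpq}. The two points that need a little care are the identification of the canonical trace with $\la$ (where \eqref{eq:cha} alone only records the behaviour on the $\Z$-subgroup, so one must also use amenability and Lemma~\ref{lem:ob}, or argue on the subalgebra $C^*(\Z[\tfrac1{pq}])$ directly), and the equivalence ``infinite support $\Leftrightarrow$ full support'' for an ergodic $\times p,\times q$-invariant measure---which is exactly the point where Furstenberg's topological theorem, equivalently the almost minimality of $\Z^2\act X$ from Lemma~\ref{lem:xpxq}, enters.
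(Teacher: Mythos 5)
Your proof is correct and takes essentially the same route as the paper: the bijection of Proposition~\ref{prop:tpq}, the icc lemma (via \cite[Propositions 7.A.1 and 11.C.3]{BdlH20}) to ensure the canonical trace is an extreme trace, and the identification of the canonical trace with Lebesgue measure through \eqref{eq:cha}. The only difference is bookkeeping: the paper absorbs your point (ii) (infinite support $\Leftrightarrow$ full support, via Furstenberg's topological theorem, equivalently almost minimality) into its phrasing ``infinite (hence full) support'' in the statement of the conjecture, whereas you spell it out.
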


For the purpose of the following discussion, let us say that an icc group $\G$ has the \emph{weak unique trace property} if the canonical trace is the only faithful extreme tracial state on $C^*_r(\G)$ and is \emph{weakly $C^*$-simple} if every non-zero ideal $I\unlhd C^*_r(\G)$ intersects $\C\G$ non-trivially.  In \cite{S-T20}, it was shown that, for $p,q\geq 2$ multiplicatively independent integers, $\Z[\frac{1}{pq}]\rtimes\Z^2$ is weakly $C^*$-simple.  Furthermore,  Corollary \ref{thm:tf} can be rephrased as saying that Furstenberg's $\times p,\times q$ conjecture holds if and only if $\Z[\frac{1}{pq}]\rtimes\Z^2$ has the weak unique trace property.

Since $C^*$-simplicity implies the unique trace property by \cite[Corollary 4.3]{BKKO}, one could naively wonder whether weak $C^*$-simplicity implies the weak unique trace property (by the above, if this were true, it would imply Furstenberg's conjecture). Unfortunately, this does not hold in general, as the followiong example shows.

\begin{example}
\label{ex:Vaes}
It was observed by Ozawa that the \emph{lamplighter group} $\Z_2\wr\Z$ is weakly $C^*$-simple (\cite{Alekseevoberwolfach}). We claim that $\Z_2\wr\Z$ does not have the weak unique trace property.  Given $t\in(0,1)$, let $\mu_t:=\prod_\Z(t\delta_0+(1-t)\delta_1)\in\partial\cP_\Z(\{0,1\}^\Z)$. Since $\mu_t$ has full support, it follows from \cite[Corollary 2.4]{N13} that $\mu_t$ gives rise to a faithful extreme tracial state on $C^*(\Z_2\wr\Z)\simeq C(\{0,1\}^\Z)\rtimes\Z$ (this has been observed independently by Vaes in \cite{V22}).  
\end{example}

\subsection*{K-theory}
Given an automorphism $\alpha$ on a $C^*$-algebra $A$, let $\iota\colon A\to A\rtimes\Z$ be the canonical embedding. The \emph{Pimsner-Voiculescu sequence} is the following exact sequence (see, for instance, \cite{PV80}): 
\[
\xymatrix{
  K_0(A) \ar[r]^{\mathrm{id}-\alpha_*} & K_0(A) \ar[r]^{\iota_*} & K_0(A\rtimes\Z) \ar[d]^{\partial_0} \\
   K_1(A\rtimes\Z) \ar[u]^{\partial_1} & K_1(A) \ar[l]^{\iota_*} & K_1(A) \ar[l]^{\mathrm{id}-\alpha_*} 
}
\]
We provide the proof of the following simple lemma for the reader's convenience.
\begin{lemma}\label{lem:alg} Let $m\leq n$ be positive integers and $\psi$ be the endomorphism on $\frac{\Z}{n\Z}$ given by $\psi(x):=mx$, for $x\in\frac{\Z}{n\Z}$. Then $\ker(\psi)\simeq\frac{\frac{\Z}{n\Z}}{\Ima\psi}\simeq\frac{\Z}{\gcd(m,n)\Z}$.
\end{lemma}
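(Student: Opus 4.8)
The plan is to prove the three isomorphisms directly. First I would compute $\ker(\psi)$. An element $x \in \Z/n\Z$ lies in $\ker(\psi)$ precisely when $mx \equiv 0 \pmod n$, i.e.\ when $n \mid mx$. Writing $d := \gcd(m,n)$, $m = d m'$, $n = d n'$ with $\gcd(m',n') = 1$, the condition becomes $n' \mid m' x$, hence $n' \mid x$. So $\ker(\psi) = n' \Z / n\Z = \{0, n', 2n', \dots, (d-1)n'\}$, a cyclic group of order $d$; thus $\ker(\psi) \simeq \Z/d\Z = \Z/\gcd(m,n)\Z$. I would phrase this via the obvious surjection $\Z/d\Z \to \ker(\psi)$, $k \mapsto k n'$, and check injectivity.

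Next I would compute $\Ima(\psi)$. Since $\psi$ is the composite of multiplication by $m$ on $\Z$ followed by reduction mod $n$, we have $\Ima(\psi) = (m\Z + n\Z)/n\Z = d\Z/n\Z$, using that $m\Z + n\Z = \gcd(m,n)\Z = d\Z$ in $\Z$. Therefore the quotient is $(\Z/n\Z) / (d\Z/n\Z) \simeq \Z/d\Z$ by the third isomorphism theorem, which gives $\frac{\Z/n\Z}{\Ima\psi} \simeq \Z/\gcd(m,n)\Z$.

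Finally, either of these two computations, combined with the first isomorphism theorem $(\Z/n\Z)/\ker(\psi) \simeq \Ima(\psi)$ applied to the endomorphism $\psi$, closes the loop and shows all three groups are isomorphic; alternatively one simply notes both have been identified with $\Z/\gcd(m,n)\Z$. There is no real obstacle here: the only mild care needed is keeping track of the coprimality reduction $m = dm'$, $n = dn'$ with $\gcd(m',n')=1$ when arguing that $n \mid mx \iff n' \mid x$, and noting that the hypothesis $m \le n$ is not actually needed for the statement (it is presumably there because in the application $\psi$ arises from a connecting map, but the algebra is hypothesis-free). The whole thing is a short exercise in elementary group theory.
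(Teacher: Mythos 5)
Your proof is correct and follows essentially the same route as the paper, whose argument also rests on identifying $\Ima\psi=\gcd(m,n)\frac{\Z}{n\Z}$ and then concluding by counting. You merely spell out explicitly (via the explicit kernel computation and the isomorphism theorems) what the paper compresses into ``the result then follows from cardinality arguments,'' and your remark that the hypothesis $m\leq n$ is unnecessary is accurate.
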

\begin{proof}
    Clearly, $\Ima\psi=\gcd(m,n)\frac{\Z}{n\Z}\simeq\frac{\Z}{\frac{n}{gcd(m,n)}\Z}.$ The result then follows from cardinality arguments.
\end{proof}
\begin{theorem}\label{thm:kt}
Let $p,q\geq 2$ be integers.  For $i=0,1$, we have $$K_i(C^*(\Z[1/pq]\rtimes\Z^2))\simeq\Z^2\oplus\frac{\Z}{\gcd(p-1,q-1)\Z}.$$
\end{theorem}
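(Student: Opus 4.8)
The plan is to compute the K-theory by realizing $C^*(\Z[1/pq]\rtimes\Z^2) \simeq C(X)\rtimes\Z^2$ and applying the Pimsner--Voiculescu sequence twice, peeling off one copy of $\Z$ at a time. First I would write $\Z[1/pq] = \varinjlim(\Z, \cdot pq)$, so that $C^*(\Z[1/pq]) \simeq C(X) = \varinjlim(C(\T), \varphi^*)$, and compute $K_*(C^*(\Z[1/pq]))$ by continuity of K-theory: since $K_0(C(\T)) = \Z$ with the map $\varphi^* $ being multiplication by $1$ (a degree-$pq$ covering induces the identity on $K_0$ of the circle at the level of the unit/rank), and $K_1(C(\T)) = \Z$ with $\varphi^*$ being multiplication by $pq$ on the generator $[z]$, we get $K_0(C^*(\Z[1/pq])) = \Z$ and $K_1(C^*(\Z[1/pq])) = \varinjlim(\Z, \cdot pq) = \Z[1/pq]$.

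Next I would run the first Pimsner--Voiculescu sequence for the crossed product of $A_0 := C^*(\Z[1/pq])$ by the first $\Z$-factor, acting via multiplication by $p$ (call the induced automorphism $\alpha$). On $K_0(A_0) = \Z$, multiplication by $p$ on $\Z[1/pq]$ acts as the identity (it fixes the rank class), so $\id - \alpha_* = 0$ on $K_0$. On $K_1(A_0) = \Z[1/pq]$, it acts as multiplication by $p$, which is an \emph{isomorphism} of $\Z[1/pq]$, so $\id - \alpha_* = 1 - p$ is injective with cokernel $\Z[1/pq]/(1-p)\Z[1/pq] = \Z/(p-1)\Z$ (since $1-p$ is a unit in $\Z[1/q]$ and has content $p-1$ in $\Z[1/pq]$; here one should be careful that $p-1$ is prime to $pq$ after cancelling common factors — more precisely $\Z[1/pq]/(p-1) \cong \Z/(p-1)'\Z$ where $(p-1)'$ is the prime-to-$pq$ part of $p-1$, and I would double check this identification). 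Feeding this into the six-term sequence gives $K_0(A_0\rtimes\Z) \cong \Z \oplus \Z/(p-1)'\Z$ and $K_1(A_0\rtimes\Z) \cong \Z$ (roughly; the exact extension-splitting needs to be checked, and one uses that $K_1(A_0)$ maps in via $\iota_*$ after the injective $\id-\alpha_*$).

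Then I would run the second Pimsner--Voiculescu sequence for crossing $A_1 := A_0\rtimes\Z$ by the second $\Z$-factor acting via multiplication by $q$, and here is where the main obstacle lies: I need to know how the second automorphism acts on $K_*(A_1)$, in particular on the torsion summand $\Z/(p-1)'\Z$ sitting inside $K_0(A_1)$, and on the free parts. The action of multiplication by $q$ on the $\Z[1/pq]$ level descends, under the boundary maps of the first PV sequence, to multiplication by $q$ on the relevant subquotients; tracking this through the connecting maps is the delicate bookkeeping. Once I know $\id - (\cdot q)_* $ on each summand — it should be $0$ on the free $\Z^2$ part coming from ranks and from the $K_1$ Bott-type generator, and multiplication by $1-q$ on $\Z/(p-1)'\Z$ (which has cokernel and kernel each $\Z/\gcd(p-1,q-1)\Z$ by Lemma~\ref{lem:alg}, after reconciling the prime-to-$pq$ adjustments) — the final six-term sequence assembles to $K_i \cong \Z^2 \oplus \Z/\gcd(p-1,q-1)\Z$ for $i=0,1$, the kernel and cokernel contributions matching up symmetrically. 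The referee's flagged gap presumably concerns exactly this step: ensuring the connecting maps split as claimed and that no extension problem obstructs the splitting, which I would resolve by exhibiting explicit generators (the class of $1$, the Bott element, and the unitaries $u_{(1,0,0)}$, $u_{(0,1,0)}$) and checking their orders directly, rather than relying on abstract exactness alone.
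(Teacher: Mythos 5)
Your overall strategy --- peeling off the two $\Z$-factors one at a time with Pimsner--Voiculescu, starting from $K_0(C^*(\Z[1/pq]))\cong\Z$ and $K_1(C^*(\Z[1/pq]))\cong\Z[1/pq]$ --- is viable and close in spirit to the paper's proof; the difference is that the paper first crosses by multiplication by $pq$, so that the intermediate algebra is $C^*(BS(1,pq))$, whose K-groups together with \emph{explicit generators} are imported from \cite{PV18}, and then runs a single PV sequence. As written, though, your computation contains a concrete error and leaves the two genuinely delicate points unproved. The error: since $\id-\alpha_*$ is zero on $K_0(A_0)\cong\Z$ and injective (multiplication by $1-p$, up to replacing $p$ by $p^{-1}$) on $K_1(A_0)\cong\Z[1/pq]$, the first PV sequence yields $K_0(A_1)\cong\Z$ and $K_1(A_1)\cong\Z\oplus\Z[1/pq]/(1-p)\Z[1/pq]$; the torsion lands in $K_1$, not in $K_0$ as you state, so in the second step the summand you must track sits in $K_1(A_1)$.

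The two gaps are exactly the points you wave at. First, to run the second PV sequence you must actually determine the action of the multiplication-by-$q$ automorphism on $K_1(A_1)$; this needs an equivariant identification of generators, namely that the torsion summand is the image of the equivariant map $\iota_*$ from $K_1(A_0)$ (on which the automorphism is multiplication by $q$), and that the free summand is generated by the class of the unitary implementing the first $\Z$-action, which is fixed because the two $\Z$-factors commute --- equivalently, surjectivity of the index map with an invariant lift. You assert this ("it should be \dots") without argument; it is precisely the information the paper gets from \cite{PV18} (the generators $[u_{(0,1)}]_1$, $[u_{(1,0)}]_1$ and surjectivity of $\partial_1$). Second, the resulting extension $0\to\Z\to K_0(C^*(\Z[1/pq]\rtimes\Z^2))\to\Z\oplus\frac{\Z}{\gcd(p-1,q-1)\Z}\to 0$ has torsion in the quotient, so it does not split for formal reasons, and "checking orders of explicit generators" is not a proof; the paper settles exactly this point (the gap flagged by the referee) by composing with the map on $K_0$ induced by the trivial representation of the group, which is a left inverse to $\iota_*$ and hence splits the sequence. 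Your arithmetic caveats do reconcile: $\Z[1/pq]/(p-1)\Z[1/pq]\cong\Z/(p-1)'\Z$ with $(p-1)'$ the prime-to-$pq$ part of $p-1$, and $\gcd((p-1)',q-1)=\gcd(p-1,q-1)$ since any common divisor of $p-1$ and $q-1$ is automatically coprime to $pq$, so the final answer would indeed come out as stated once the above steps are supplied.
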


\begin{proof}
Let $\eta\colon\Z\act\Z[\frac{1}{pq}]$ be given by multiplication by $pq$ and $\g\colon \Z\act\Z[\frac{1}{pq}]\rtimes_\eta\Z$ be given by $\g_n(x,z):=(p^nx,z)$, for $(x,z)\in\Z[\frac{1}{pq}]\rtimes_\eta\Z$. Notice that 
\begin{align*}
    \Z[1/pq]\rtimes\Z^2&\to(\Z[1/pq]\rtimes_\eta\Z)\rtimes_\gamma\Z\\
    (x,m,n)&\mapsto(x,n,m-n)
\end{align*}
is an isomorphism.

Let $A:=C^*(\Z[1/pq]\rtimes_\eta\Z)$. Using the fact that $\Z[1/pq]\rtimes_\eta\Z$ is isomorphic to the \emph{Baumslag-Solitar group} $BS(1,pq)$, it follows from \cite[Theorem 1]{PV18} that $K_0(A)=\Z[1_A]$ (an infinite cyclic group generated by $[1_A]$), and $K_1(A)=\Z\oplus\frac{\Z}{(pq-1)\Z}$, with generators $[u_{(0,1)}]_1$ of infinite order and $[u_{(1,0)}]_1$ of order $pq-1$.

Consider the Pimsner-Voiculescu sequence associated to $\tilde{\gamma}\colon\Z\act A$, where $\tilde{\g}$ is the action induced by $\g$. Surjectivity of $\partial_1$ follows from \cite[Lemma 2]{PV18}. Moreover, $\mathrm{id}-\tilde{\g}_*\colon K_0(A)\to K_0(A)$ is $0$. Therefore, the following sequence is exact:
\[
\xymatrix{
0\ar[r]&\Z\ar[r]^-{\iota_*} & K_0(A\rtimes_{\tilde{\g}}\Z)\ar[r]&\Z\oplus\frac{\Z}{(pq-1)\Z}\\
\ar[r]^-{\mathrm{id}-\tilde{\g}_*}&\Z\oplus\frac{\Z}{(pq-1)\Z}\ar[r]^-{\iota_*}&K_1(A\rtimes_{\tilde{\g}}\Z)\ar[r]&\Z\ar[r]&0.
}
\]
For $(x,y)\in\Z\oplus\frac{\Z}{(1-pq)\Z}$, we have that $(\tilde{\g}_*-\mathrm{id})(x,y)=(0,(p-1)y)$. In particular, by Lemma \ref{lem:alg}, we have that 
$$\frac{\Z\oplus\frac{\Z}{(1-pq)\Z}}{\Ima(\mathrm{id}-\tilde{\g}_*)}\simeq\Z\oplus\frac{\Z}{\gcd(1-pq,p-1)\Z}=\Z\oplus\frac{\Z}{\gcd(p-1,q-1)\Z}.$$ This finishes the computation of $K_1(C^*(\Z[1/pq]\rtimes\Z^2))$. 

By Lemma \ref{lem:alg} again, we also have that $\ker(\mathrm{id}-\tilde{\g}_*)\simeq\Z\oplus\frac{\Z}{\gcd(p-1,q-1)\Z}$. 

Let $\tau\colon A\rtimes_{\tilde{\g}}\Z\cong C^*(\Z[1/pq]\rtimes\Z^2)\to\C$ be the unital $*$-homomorphism associated with the trivial representation of $\Z[1/pq]\rtimes\Z^2$. By identifying $K_0(\C)$ with $\Z$, we obtain that $K_0(\tau)\circ\iota_*=\id_\Z$, and the short exact sequence
\[
\xymatrix{
0\ar[r]&\Z\ar[r]^-{\iota_*} & K_0(A\rtimes_{\tilde{\g}}\Z)\ar[r]& \Z\oplus\frac{\Z}{\gcd(p-1,q-1)\Z}\ar[r]&0
}
\]
left splits. This concludes the proof. 
\end{proof}
\begin{example}
Given integers $p,q\geq 2$, let $\alpha^{p,q}\colon\Z^2\act\Z[\frac{1}{pq}]$ be given by multiplication by $p$ and $q$. For $i=0,1$, we have
\begin{align*}
K_i(C^*(\Z[1/6]\rtimes_{\alpha^{2,3}}\Z^2))&\simeq\Z^2,\\
K_i(C^*(\Z[1/15]\rtimes_{\alpha^{3,5}}\Z^2))&\simeq\Z^2\oplus\Z_2.
\end{align*}
\end{example}

\section{Questions}\label{sec:q}
Let us conclude by posing a few questions that arise naturally in light of our results.

Given multiplicatively independent integers $p,q\geq 2$, we know that the group $\Z[\frac{1}{pq}]\rtimes\Z^2$ is weakly $C^*$-simple by \cite{S-T20}, and that it has the weak unique trace property if and only if Furstenberg's conjecture is true by Corollary~\ref{thm:tf} (this terminology is defined in the discussion following Corollary~\ref{thm:tf}). We have also seen in Example~\ref{ex:Vaes} that $\Z_2\wr\Z$ is weakly $C^*$-simple without having the weak unique trace property. 

 It follows from \cite[Theorem 6.11]{BKKO} that, for groups with countably many subgroups, $C^*$-simplicity is equivalent to the unique trace property. On the other hand, $\Z[\frac{1}{pq}]\rtimes\Z^2$ has countably many subgroups by the argument in \cite[Corollary 8.4]{BLT19} (whereas $\Z_2\wr\Z$ has uncountably many subgroups). 
 \begin{question}Is there an icc group with \emph{countably many subgroups} that is weakly $C^*$-simple but does not have the weak unique trace property? 
 \end{question}  
 Given a faithful, almost minimal action $\G\acts X$ as in Theorem~\ref{thm:pis}, the crossed product $C(X)\rtimes\G$ has the property that any of its \emph{irreducible} representations are either faithful or have finite-dimensional image. 
    \begin{question}
    Can (some of) the theory of just-infinite $C^*$-algebras from \cite{GMR} be extended to the class of $C^*$-algebras for which any irreducible representation is either faithful or has finite-dimensional image? 
    \end{question}
In \cite{Rordam-jinfinite}, it was shown that each infinite dimensional metrizable Choquet simplex arises as the trace simplex of a residually finite-dimensional just-infinite AF-algebra. A $C^*$-algebra is said to be \emph{subhomogeneous} if it is isomorphic to a sub-$C^*$-algebra of $M_n(C_ 0(X))$ for some $n\in\Z_{>0}$ and $X$ locally compact Hausdorff space. A $C^*$-algebra is said to be \emph{approximately subhomogeneous} (ASH-algebra) if it is the inductive limit of a sequence of subhomogeneous $C^*$-algebras. 
\begin{question}
    Let $p,q\geq 2$ be multiplicatively independent integers. Is there a unital ASH-algebra $A$ with the property that any of its \emph{irreducible} representations are either faithful or have finite-dimensional image, and such that $A$ has the same K-theory and primitive ideal space of $C^*(\Z[1/pq]\rtimes\Z^2)$? If yes, can one compute the trace simplex of $A$?
\end{question}
\raggedright
\bibliography{bibliografia}

\end{document}